\newtheorem{thm}{Theorem}[section]
\newtheorem{cor}[thm]{Corollary}
\newtheorem{lem}[thm]{Lemma}
\newtheorem{proposition}[thm]{Proposition}
\theoremstyle{definition}
\newtheorem{definition}[thm]{Definition}
\theoremstyle{remark}
\newtheorem{remark}[thm]{Remark}
\numberwithin{equation}{section}
\newcommand{\K}{\mathbb K}
\newcommand{\A}{\mathcal{A}}
\newcommand{\HH}{\mathcal{H}}
\begin{document}

\title[  Hom-Lie admissible Hom-coalgebras and Hom-Hopf algebras]{  Hom-Lie admissible Hom-coalgebras and Hom-Hopf algebras}%
\author{A. MAKHLOUF and S. SILVESTROV}%
\address{Abdenacer Makhlouf, Universit\'{e} de Haute Alsace,  Laboratoire de Math\'{e}matiques, Informatique et Application,
4, rue des Fr\`{e}res Lumi\`{e}re F-68093 Mulhouse, France}%
\email{Abdenacer.Makhlouf@uha.fr}
\address{Sergei Silvestrov, Centre for Mathematical Sciences,  Lund University, Box
   118, SE-221 00 Lund, Sweden}
\email{ssilvest@maths.lth.se}

\thanks {This work was partially supported by the Crafoord foundation, The Royal Physiographic Society in Lund,
The Swedish Royal Academy of Sciences, The Swedish Foundation of
International Cooperation in Research and High Education (STINT) and
the Liegrits network}
\keywords{Hom-Hopf algebra, Hom-Associative algebra, Hom-Lie
algebra, Hom-coalgebra, Hom-Lie admissible Hom-coalgebra }
\date{April 10th  2007}
%
\begin{abstract}
The aim of this paper is to generalize the concept of Lie-admissible
coalgebra introduced in \cite{GR} to Hom-coalgebras and to introduce
Hom-Hopf algebras with some properties. These structures are based
on the Hom-algebra structures introduced in \cite{MS}.
\end{abstract}
\maketitle

\section*{Introduction}

In \cite{HLS,LS1,LS2}, the class of quasi-Lie algebras and
subclasses of quasi-hom-Lie algebras and Hom-Lie algebras has been
introduced. These classes of algebras are tailored in a way suitable
for simultaneous treatment of the Lie algebras, Lie superalgebras,
the color Lie algebras and the deformations arising in connection
with twisted, discretized or deformed derivatives and corresponding
generalizations, discretizations and deformations of vector fields
and differential calculus. It has been shown in
\cite{HLS,LS1,LS2,LS3} that the class of quasi-Hom-Lie algebras
contains as a subclass on the one hand the color Lie algebras and in
particular Lie superalgebras and Lie algebras, and on the another
hand various known and new single and multi-parameter families of
algebras obtained using twisted derivations and constituting
deformations and quasi-deformations of universal enveloping algebras
of Lie and color Lie algebras and of algebras of vector-fields. The
main feature of quasi-Lie algebras, quasi-Hom-Lie algebras and
Hom-Lie algebras is that the skew-symmetry and the Jacobi identity
are twisted by several deforming twisting maps and also in quasi-Lie
and quasi-Hom-Lie algebras the Jacobi identity in general contains 6
twisted triple bracket terms.

In the paper \cite{MS}, we provided a different
way for constructing Hom-Lie algebras by
extending the fundamental construction of Lie
algebras from associative algebras via commutator
bracket multiplication. To this end we defined
the notion of Hom-associative algebras
generalizing associative algebras to a situation
where associativity law is twisted, and showed
that the commutator product defined using the
multiplication in a Hom-associative algebra leads
naturally to Hom-Lie algebras. We introduced also
Hom-Lie-admissible algebras and more general
$G$-Hom-associative algebras with subclasses of
Hom-Vinberg and pre-Hom-Lie algebras,
generalizing to the twisted situation
Lie-admissible algebras, $G$-associative
algebras, Vinberg and pre-Lie algebras
respectively, and show that for these classes of
algebras the operation of taking commutator leads
to Hom-Lie algebras as well. We constructed also
all the twistings so that the brackets
$[X_1,X_2]=2 X_2, \ [X_1,X_3]=-2 X_3, \
[X_2,X_3]=X_1$ determine a three dimensional
Hom-Lie algebra. Finally, we provided for a
subclass of twistings, the list of all
three-dimensional Hom-Lie algebras. This list
contains all three-dimensional Lie algebras for
some values of structure constants. The families
of Hom-Lie algebras in these list can be viewed
as deformations of Lie algebras into a class of
Hom-Lie algebras. The notion, constructions and
properties of the enveloping algebras of Hom-Lie
algebras are yet to be properly studied in full
generality. An important progress in this
direction has been made in the recent work by D.
Yau \cite{Yau:EnvLieAlg}.

In the present paper we develop the coalgebra counterpart of the
notions and results of \cite{MS}, extending in particular in the
framework of Hom-associative and Hom-Lie algebras and
Hom-coalgebras, the notions and results on associative and Lie
admissible coalgebras obtained in \cite{GR}. In the first section we
summarize the relevant definitions of Hom-associative algebra,
Hom-Lie algebra, Hom-Leibniz algebra, and define the notions of
Hom-coalgebras and Hom-coassociative coalgebras. In section 2, we
introduce the concept of Hom-Lie admissible Hom-coalgebra, describe
some useful relations between coproduct, opposite coproduct, the
cocommutator defined as their difference, and their $\beta$-twisted
coassociators and $\beta$-twisted co-Jacobi sums. We also introduce
the notion of $G$-Hom-coalgebra for any subgroup $G$ of permutation
group $S_3$. We show that $G$-Hom-coalgebras are Hom-Lie admissible
Hom-coalgebras, and also establish duality based correspondence
between classes of $G$-Hom-coalgebras and $G$-Hom-algebras. The last
section is dedicated to relevant definitions and basic properties of
the Hom-Hopf algebra which generalize the classical Hopf algebra
structure. We also define the module and comodule structure over
Hom-associative algebra or
 Hom-coassociative coalgebra.

\section{Hom-Algebra  and Hom-Coalgebra structures}
  A Hom-algebra
structure is a multiplication on a vector space
where the structure is twisted by a homomorphism.
The structure of Hom-Lie algebra was introduced
by Hartwig, Larson and Silvestrov in \cite{HLS}.
In the following  we summarize the definitions of
Hom-associative, Hom-Leibniz, and
Hom-Lie-admissible algebraic structures
introduced in \cite{MS} and generalizing the well
known associative, Leibniz and Lie-admissible
algebras. By dualization of Hom-associative
algebra we define the Hom-coassociative coalgebra
structure.
\subsection{Hom-algebra structures}
Let $\mathbb{K}$ be an algebraically closed field of characteristic
$0$ and $V$ be a vector space over $\mathbb{K}$.
\begin{definition}
A \emph{Hom-associative algebra} over $V$ is a linear map $\mu :
V\otimes V \rightarrow V$ and a homomorphism
$\alpha$ satisfying
\begin{equation}\label{Hom-ass}
\mu(\alpha(x)\otimes \mu (y\otimes z))= \mu (\mu
(x\otimes y)\otimes \alpha (z)).
\end{equation}
\end{definition}
The Hom-associativity condition \eqref{Hom-ass}
may be expressed by the following commutative
diagram.
$$
\begin{array}{ccc}
V\otimes V\otimes V & \stackrel{\mu \otimes \alpha}{\longrightarrow
} & V\otimes
V \\
\ \quad \left\downarrow ^{\alpha\otimes \mu }\right. &  & \quad
\left\downarrow
^\mu \right. \\
V\otimes V & \stackrel{\mu }{\longrightarrow } & V
\end{array}
$$

The Hom-associative algebra is unital if there exists a homomorphism
$\eta :\mathbb{K}\rightarrow V$ such that the following diagrams are
commutative

$$\begin{array}{ccccc} \K \otimes V &
\stackrel{\eta \otimes id}{\longrightarrow } & V\otimes V &
\stackrel{id\otimes \eta }{\longleftarrow } & V\otimes \K \\
& \searrow ^{\cong } & \quad \left\downarrow ^\mu \right. & \swarrow
^{\cong
} &  \\
&  & V &  &
\end{array}
$$
In the language of Hopf algebra, a Hom-associative algebra $\A$ is a
quadruple $(V,\mu,\alpha,\eta)$ where $V$ is the vector space, $\mu$
is the Hom-associative multiplication, $\alpha$ is the twisting
homomorphism and $\eta$ is the unit.

 Let $\left( V,\mu ,\alpha,\eta \right) $ and $\left( V^{\prime },\mu
^{\prime },\alpha^{\prime },\eta ^{\prime }\right) $ be two
Hom-associative algebras. A linear map $f\
:V\rightarrow V^{\prime }$ is a morphism of Hom- associative algebras if%
$$
\mu ^{\prime }\circ (f\otimes f)=f\circ \mu
\qquad \text{,}\qquad f\circ \eta =\eta ^{\prime
}\qquad \text{and}\qquad f\circ
\alpha=\alpha^{\prime }\circ f.
$$

In particular, $\left( V,\mu,\alpha ,\eta \right) $ and $\left(
V,\mu ^{\prime },\alpha^{\prime },\eta ^{\prime }\right) $ are
isomorphic if there exists a
bijective linear map $f\ $such that%
$$
\mu =f^{-1}\circ \mu ^{\prime }\circ (f\otimes
f)\qquad \text{,}\qquad \eta =f^{-1}\circ \eta
^{\prime }\qquad \text{and}\qquad \alpha=
f^{-1}\circ \alpha^{\prime }\circ f.
$$

The tensor product of two Hom-associative algebras $\left( V_1,\mu
_1,\alpha_1 ,\eta_1 \right) $ and $\left( V_2,\mu_2,\alpha_2 ,\eta_2
\right) $ is defined in an obvious way by the Hom-associative
algebra $\left( V_1\otimes V_2,\mu _1 \otimes \mu_2,\alpha_1 \otimes
\alpha_2 ,\eta_1 \otimes\eta_2 \right) $.

The Hom-Lie algebras were initially introduced by Hartwig, Larson
and Silvestrov in \cite{HLS} motivated initially by examples of
deformed Lie algebras coming from twisted discretizations of vector
fields.

\begin{definition}
A \emph{Hom-Lie algebra} is a triple $(V, [\cdot, \cdot], \alpha)$
consisting of
 a linear space $V$, bilinear map $[\cdot, \cdot]: V\times V \rightarrow V$ and
 a linear space homomorphism $\alpha: V \rightarrow V$
 satisfying
$$\begin{array}{c} [x,y]=-[y,x] \quad {\text{(skew-symmetry)}} \\{}
\circlearrowleft_{x,y,z}{[\alpha(x),[y,z]]}=0 \quad
{\text{(Hom-Jacobi condition)}}
\end{array}$$
for all $x, y, z$ from $V$, where $\circlearrowleft_{x,y,z}$ denotes
summation over the cyclic permutation on $x,y,z$.
\end{definition}
In a similar way we have the following definition of Hom-Leibniz
algebra.
\begin{definition}
A \emph{Hom-Leibniz algebra} is a triple $(V, [\cdot, \cdot],
\alpha)$ consisting of a linear space $V$, bilinear map $[\cdot,
\cdot]: V\times V \rightarrow V$ and a homomorphism $\alpha: V
\rightarrow V$  satisfying
\begin{equation} \label{Leibnizalgident}
 [[x,y],\alpha(z)]=[[x,z],\alpha (y)]+
 [\alpha(x),[y,z]].
\end{equation}
\end{definition}
Note that if a Hom-Leibniz algebra is skewsymmetric then it is a
Hom-Lie algebra.

\subsection{Hom-Coalgebra structures}
\begin{definition}
A \emph{Hom-coassociative coalgebra} is a quadruple $\left( V,\Delta
,\beta, \varepsilon \right) $ where $V$ is a $\K$-vector space and
$$\Delta :V\rightarrow V\otimes V,\quad \beta :V\rightarrow V \
\text{ and } \ \varepsilon :V\rightarrow \K $$
are linear maps satisfying the following
conditions:

\begin{itemize}
\item[(C1)] $\qquad \left( \beta\otimes \Delta
\right) \circ \Delta =\left( \Delta \otimes
\beta\right) \circ \Delta $
\item[(C2)] $\qquad \left( id\otimes \varepsilon
\right) \circ \Delta =id\qquad $  and \qquad
$\left( \varepsilon \otimes id\right) \circ
\Delta =id$.
\end{itemize}
\end{definition}

The condition (C1) expresses the
Hom-coassociativity  of the comultiplication
$\Delta $. Also,  it is equivalent to the
following commutative diagram:
$$
\begin{array}{ccc}
 V & \stackrel{\Delta}{\longrightarrow
} & V\otimes
V \\
\ \quad \left\downarrow ^{ \Delta}\right. &  & \quad \left\downarrow
^{\beta\otimes\Delta}  \right. \\
V\otimes V & \stackrel{\Delta \otimes \beta}{\longrightarrow } &
V\otimes V\otimes V
\end{array}
$$

The condition (C2) expresses that $\varepsilon $
is the counit which is also  equivalent to the
following commutative diagrams:

$$\begin{array}{ccccc} \K \otimes V &
\stackrel{\varepsilon \otimes id_V}{\longleftarrow } & V\otimes V &
\stackrel{id\otimes \varepsilon }{\longrightarrow } & V\otimes \K \\
& \nwarrow ^{\cong } & \quad \left\uparrow ^\Delta \right. &
\nearrow^{\cong
} &  \\
&  & V &  &
\end{array}
$$

Let $\left( V,\Delta ,\beta,\varepsilon \right) $
and $\left( V^{\prime },\Delta ^{\prime
},\beta^{\prime }, \varepsilon ^{\prime }\right)
$ be two Hom-coassociative coalgebras. A
linear map  $f\ :V\rightarrow V^{\prime }$ is a morphism of Hom-coassociative coalgebras if%
$$
(f\otimes f)\circ \Delta =\Delta ^{\prime }\circ f \quad \text{,}
 \quad\varepsilon =\varepsilon ^{\prime }\circ f \quad \text{ and}\quad f\circ\beta= \beta^{\prime }\circ f.$$

If $V=V^{\prime }$ the previous Hom-coassociative
coalgebras are isomorphic if there exists a
bijective linear map $f\ :V\rightarrow V$ such
that $$ \Delta ^{\prime }=(f\otimes f)\circ
\Delta \circ f^{-1}\quad \text{,} \quad
\varepsilon ^{\prime }=\varepsilon \circ f^{-1}
\quad \text{ and}\quad \beta=f^{-1}\circ
\beta^{\prime }\circ f. $$

In the sequel, we call \emph{Hom-coalgebra} a
triple $(V,\Delta, \beta )$ where $V$ is a
$\K$-vector space, $\Delta$ is a comultiplication
not necessarily coassociative or
Hom-coassociative, that is a linear map $\Delta
: V\rightarrow V\otimes V$, and $\beta$ is a
linear map $\beta : V\rightarrow V.$

\section{Hom-Lie admissible Hom-Coalgebras}

Let $\mathbb{K}$ be an algebraically closed field
of characteristic $0$ and $V$ be a vector space
over $\mathbb{K}$. Let $(V,\Delta, \beta )$ be a
Hom-coalgebra where $\Delta  : V\rightarrow
V\otimes V$ and $\beta : V\rightarrow V$ are
linear maps and $\Delta$  is not necessarily
coassociative or Hom-coassociative.

By a  \emph{$\beta$-coassociator} of $\Delta$ we call a linear map
$\textbf{c}_\beta (\Delta)$ defined by
$$\textbf{c}_\beta (\Delta):= \left(
\Delta \otimes \beta\right) \circ \Delta-
\left( \beta\otimes \Delta \right) \circ \Delta.
$$
Let $\mathcal{S}_3$ be the symmetric group of order 3. Given
$\sigma\in \mathcal{S}_3 $, we define a linear map
$$\Phi _ \sigma \ : V^{\otimes 3}\longrightarrow V^{\otimes 3}
$$
by
$$\Phi _ \sigma (x_1\otimes x_2 \otimes x_3 )= x_{\sigma ^{-1}(1)}  \otimes x_{\sigma
^{-1}(2)}\otimes x_{\sigma ^{-1}(3)}.
$$
Recall that $\Delta^{op}=\tau\circ \Delta$ where $\tau$ is the usual
flip that is $\tau (x\otimes y)= y \otimes x$.
\begin{definition} \label{def:HomLieadmissibleHomcoalg}
  A triple
$( V, \Delta, \beta)$   is a \emph{Hom-Lie admissible Hom-coalgebra}
if the linear map
$$\Delta_L : V\longrightarrow V\otimes V
$$
defined by $\Delta _L = \Delta - \Delta^{op} $,  is a Hom-Lie
coalgebra multiplication, that is the following condition is
satisfied
\begin{eqnarray}\label{CoHomLie}
  \textbf{c}_\beta (\Delta_L)+\Phi_{(213)} \circ \textbf{c}_\beta (\Delta_L)+
   \Phi_{(231)} \circ \textbf{c}_\beta (\Delta_L) =
  0
\end{eqnarray}
where $(213)$ and $(231)$ are the two cyclic permutations of order 3
in $\mathcal{S}_3$.
\end{definition}
\begin{remark}
 Since $\Delta _L = \Delta - \Delta^{op} $,
 the equality $ \Delta_L ^{op}= - \Delta_L$ holds.
 \end{remark}

\begin{lem}
Let $(V,\Delta, \beta )$ be a Hom-coalgebra where $\Delta  :
V\rightarrow V\otimes V$ and $\beta : V\rightarrow V$ are linear
maps and $\Delta$  is not necessarily coassociative or
Hom-coassociative, then the following relations are true
\begin{eqnarray}
\textbf{c}_\beta (\Delta^{op})&=&-\Phi_{(13)} \circ \textbf{c}_\beta
(\Delta) \\
(\beta \otimes \Delta^{op})\circ \Delta &=& \Phi_{(13)} \circ
(\Delta \otimes \beta) \circ \Delta^{op}
 \\
 (\beta \otimes \Delta)\circ \Delta ^{op} &=& \Phi_{(13)} \circ
(\Delta ^{op} \otimes \beta) \circ \Delta
 \\
 (\Delta \otimes \beta)\circ \Delta ^{op} &=& \Phi_{(213)} \circ
( \beta\otimes \Delta ) \circ \Delta
 \\
 (\Delta  ^{op}\otimes \beta)\circ \Delta &=& \Phi_{(12)} \circ
( \Delta\otimes \beta ) \circ \Delta .
\end{eqnarray}
\end{lem}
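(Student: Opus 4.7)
\emph{Proof sketch.} The plan is to verify all five identities by direct computation in Sweedler-type notation. Write $\Delta(x) = \sum x_{(1)} \otimes x_{(2)}$, so that $\Delta^{op}(x) = \sum x_{(2)} \otimes x_{(1)}$; since all five assertions are equalities of linear maps $V \to V^{\otimes 3}$, it suffices to evaluate each side on a generic $x \in V$ and read off the resulting sum of simple tensors. Using $\Phi_\sigma(x_1 \otimes x_2 \otimes x_3) = x_{\sigma^{-1}(1)} \otimes x_{\sigma^{-1}(2)} \otimes x_{\sigma^{-1}(3)}$, together with the cycle convention in which $(213)$ denotes $1 \mapsto 3$, $2 \mapsto 1$, $3 \mapsto 2$, one records once and for all that $\Phi_{(13)}$ reverses the three factors, $\Phi_{(12)}$ swaps the first two, and $\Phi_{(213)}(a \otimes b \otimes c) = b \otimes c \otimes a$.

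Each of the four single-term identities (the second through the fifth) then says that inserting $\Delta^{op}$ in one of the two slots of an iterated coproduct permutes the tensor factors according to one of these $\Phi_\sigma$. For instance, for the second identity
\[
(\beta \otimes \Delta^{op}) \circ \Delta(x) = \sum \beta(x_{(1)}) \otimes x_{(2)(2)} \otimes x_{(2)(1)},
\]
while
\[
\Phi_{(13)} \circ (\Delta \otimes \beta) \circ \Delta^{op}(x) = \Phi_{(13)}\!\left(\sum x_{(2)(1)} \otimes x_{(2)(2)} \otimes \beta(x_{(1)})\right) = \sum \beta(x_{(1)}) \otimes x_{(2)(2)} \otimes x_{(2)(1)}.
\]
The three remaining identities follow from exactly the same one-line Sweedler comparison, with the appropriate $\Phi_\sigma$ chosen so as to undo the transposition of factors produced by $\Delta^{op}$.

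The first identity is then obtained by subtracting two such calculations: the two summands of $\mathbf{c}_\beta(\Delta^{op})$, namely $\sum x_{(2)(2)} \otimes x_{(2)(1)} \otimes \beta(x_{(1)})$ and $\sum \beta(x_{(2)}) \otimes x_{(1)(2)} \otimes x_{(1)(1)}$, match term-by-term with the image under $-\Phi_{(13)}$ of the two corresponding summands of $\mathbf{c}_\beta(\Delta)$, because $\Phi_{(13)}$ simply reverses the three tensor factors. The only real obstacle in the argument is bookkeeping: fixing the conventions for the transpositions and $3$-cycles and applying them consistently to Sweedler triples. Note that no coassociativity or Hom-coassociativity hypothesis on $\Delta$ enters, in agreement with the hypotheses of the lemma.
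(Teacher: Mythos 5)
Your verification is correct: with the paper's conventions ($\Phi_\sigma(x_1\otimes x_2\otimes x_3)=x_{\sigma^{-1}(1)}\otimes x_{\sigma^{-1}(2)}\otimes x_{\sigma^{-1}(3)}$ and $(213)$, $(231)$ read as cycles, so that $\Phi_{(13)}$ reverses the factors, $\Phi_{(12)}$ swaps the first two, and $\Phi_{(213)}(a\otimes b\otimes c)=b\otimes c\otimes a$), each of the five identities checks out exactly as you compute, and your term-by-term matching for $\mathbf{c}_\beta(\Delta^{op})=-\Phi_{(13)}\circ\mathbf{c}_\beta(\Delta)$ is right. The paper states this lemma without proof, and your direct Sweedler-notation evaluation is precisely the routine verification the authors leave to the reader; it is also consistent with the composition rules $\Phi_\sigma\circ\Phi_\tau=\Phi_{\sigma\tau}$ used later in the proof of Proposition \ref{prop:coJacobiSumA3viaS3}.
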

\begin{lem}
The $\beta$-coassociator of $\Delta_{L}$ is expressed using $\Delta$
and $\Delta ^{op}$ as follows:
\begin{eqnarray}
\label{coassDeltaLviaDeltaDeltaOp} \textbf{c}_\beta
(\Delta_{L})&=&\textbf{c}_\beta (\Delta)+\textbf{c}_\beta
(\Delta^{op})\\ \nonumber && -(\Delta \otimes \beta)\circ \Delta
^{op}-(\Delta  ^{op}\otimes \beta)\circ \Delta +\\ \nonumber  &&
\Phi_{(13)} \circ (\Delta  \otimes \beta) \circ
\Delta^{op}+\Phi_{(13)} \circ ( \Delta ^{op} \otimes \beta )
\circ \Delta \\
\label{coassDeltaLviaDelta} &=&\textbf{c}_\beta
(\Delta)-\Phi_{(13)} \circ \textbf{c}_\beta (\Delta)\\
\nonumber && -\Phi_{(213)} \circ (\beta \otimes
\Delta)\circ \Delta -\Phi_{(12)} \circ (\Delta
\otimes \beta)\circ \Delta \\ \nonumber &&
+\Phi_{(23)} \circ (\beta \otimes \Delta) \circ
\Delta +\Phi_{(231)} \circ ( \Delta \otimes \beta
) \circ \Delta .
\end{eqnarray}
\end{lem}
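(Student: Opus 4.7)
The plan is to derive both equalities by direct expansion of $\textbf{c}_\beta(\Delta_L)$ and repeated application of the rewrite rules provided by the preceding lemma. For the first equality, I would start from
$\textbf{c}_\beta(\Delta_L) = (\Delta_L\otimes\beta)\circ\Delta_L - (\beta\otimes\Delta_L)\circ\Delta_L$
and substitute $\Delta_L=\Delta-\Delta^{op}$ into each factor. Bilinearity of $\otimes$ and of composition produces eight summands. The four ``pure'' ones of the shape $(\Delta\otimes\beta)\circ\Delta$ and $(\beta\otimes\Delta)\circ\Delta$, together with the analogous pair for $\Delta^{op}$, recombine into $\textbf{c}_\beta(\Delta)+\textbf{c}_\beta(\Delta^{op})$. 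The four remaining ``mixed'' summands are $-(\Delta\otimes\beta)\circ\Delta^{op}$, $-(\Delta^{op}\otimes\beta)\circ\Delta$, $+(\beta\otimes\Delta)\circ\Delta^{op}$ and $+(\beta\otimes\Delta^{op})\circ\Delta$; rewriting the last two by $(\beta\otimes\Delta)\circ\Delta^{op}=\Phi_{(13)}\circ(\Delta^{op}\otimes\beta)\circ\Delta$ and $(\beta\otimes\Delta^{op})\circ\Delta=\Phi_{(13)}\circ(\Delta\otimes\beta)\circ\Delta^{op}$ from the preceding lemma yields precisely the claimed right-hand side.

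For the second equality I would take the first as starting point and eliminate every remaining occurrence of $\Delta^{op}$ by further appeal to the same lemma. Substituting $\textbf{c}_\beta(\Delta^{op})=-\Phi_{(13)}\circ\textbf{c}_\beta(\Delta)$ handles the coassociator term; substituting $(\Delta\otimes\beta)\circ\Delta^{op}=\Phi_{(213)}\circ(\beta\otimes\Delta)\circ\Delta$ and $(\Delta^{op}\otimes\beta)\circ\Delta=\Phi_{(12)}\circ(\Delta\otimes\beta)\circ\Delta$ handles the two unprefixed mixed terms. For the two $\Phi_{(13)}$-prefixed terms I would apply the same two identities under the $\Phi_{(13)}$ and then collapse the composites via $\Phi_\sigma\circ\Phi_\tau=\Phi_{\sigma\tau}$, a property immediate from the definition of $\Phi$ in terms of $\sigma^{-1}$. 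The small computations $(13)(213)=(23)$ and $(13)(12)=(231)$ in $\mathcal{S}_3$ produce the prefactors $\Phi_{(23)}$ and $\Phi_{(231)}$ appearing in the target formula, so collecting the six resulting terms reproduces the claim.

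The main obstacle is entirely clerical: one must carry along all eight terms produced by the initial expansion without sign errors and evaluate the two composite permutations correctly. No conceptual input beyond bilinearity of $\otimes$ and $\circ$ together with the five rewrite rules of the preceding lemma is required.
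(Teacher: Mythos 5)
Your proposal is correct: the eight-term expansion, the recombination into $\textbf{c}_\beta(\Delta)+\textbf{c}_\beta(\Delta^{op})$ plus four mixed terms with the signs you give, the applications of the five identities of the preceding lemma, and the permutation products $(13)(213)=(23)$, $(13)(12)=(231)$ under the rule $\Phi_\sigma\circ\Phi_\tau=\Phi_{\sigma\tau}$ all check out against the paper's conventions. The paper omits the proof of this lemma, but your argument is exactly the one the surrounding text intends (the preceding lemma supplies precisely these rewrite rules, and the proof of the next proposition uses the same composition rule for the $\Phi_\sigma$), so nothing further is needed.
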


\begin{proposition} \label{prop:coJacobiSumA3viaS3}
Let  $( V, \Delta, \beta)$ be  a Hom-coalgebra. Then one has
\begin{equation} \label{coJacobiSumA3viaS3}
\textbf{c}_\beta (\Delta_L)+\Phi_{(213)} \circ \textbf{c}_\beta
(\Delta_L)+
   \Phi_{(231)} \circ \textbf{c}_\beta (\Delta_L) = 2 \sum_{\sigma \in \mathcal{S}_3} {(-1)^{\epsilon (\sigma)}\Phi
_\sigma \circ \textbf{c}_\beta (\Delta)}
\end{equation}
where $(-1)^{\epsilon (\sigma)}$ is the signature of
the permutation $\sigma$.
\end{proposition}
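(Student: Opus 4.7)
The plan is to expand the left-hand side of \eqref{coJacobiSumA3viaS3} via expression \eqref{coassDeltaLviaDelta} of the preceding lemma, and then to regroup the resulting terms according to the two iterated-coproduct building blocks $(\Delta\otimes\beta)\circ\Delta$ and $(\beta\otimes\Delta)\circ\Delta$, checking that the accumulated coefficients collapse to the signed sum on the right.

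More precisely, I would first rewrite \eqref{coassDeltaLviaDelta} in the compact form
$$\textbf{c}_\beta(\Delta_L)=\sum_{\sigma\in \mathcal{S}_3} a_\sigma\, \Phi_\sigma\circ (\Delta\otimes\beta)\circ\Delta \;+\; \sum_{\sigma\in \mathcal{S}_3} b_\sigma\, \Phi_\sigma\circ (\beta\otimes\Delta)\circ\Delta,$$
where the coefficients $a_\sigma,b_\sigma\in\{-1,0,+1\}$ are read off directly from the six summands appearing in \eqref{coassDeltaLviaDelta} (three of these summands contribute to the $a_\sigma$'s and three to the $b_\sigma$'s). Then I would apply $\Phi_{(213)}$ and $\Phi_{(231)}$ to this identity, invoking the elementary relation $\Phi_\sigma\circ\Phi_\tau=\Phi_{\sigma\tau}$, which is immediate from the definition of $\Phi_\sigma$. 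Consequently every new summand is again of the form $\Phi_\rho\circ (\Delta\otimes\beta)\circ\Delta$ or $\Phi_\rho\circ (\beta\otimes\Delta)\circ\Delta$ with $\rho$ the appropriate product in $\mathcal{S}_3$.

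Summing the three expressions and collecting by the permutation index $\rho$ produces, for each $\rho\in \mathcal{S}_3$, a total coefficient multiplying $\Phi_\rho\circ (\Delta\otimes\beta)\circ\Delta$ and another multiplying $\Phi_\rho\circ (\beta\otimes\Delta)\circ\Delta$. The heart of the argument is to verify that these two coefficients equal $+2(-1)^{\epsilon(\rho)}$ and $-2(-1)^{\epsilon(\rho)}$ respectively; once this is granted, the desired identity follows by re-using $\textbf{c}_\beta(\Delta) = (\Delta\otimes\beta)\circ\Delta - (\beta\otimes\Delta)\circ\Delta$.

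The main obstacle is purely combinatorial: one has to keep track of the eighteen permutation products coming from six summands times three outer factors (the identity together with the two $3$-cycles $(213)$ and $(231)$). I would handle this by tabulating the multiplication on $\mathcal{S}_3$ once and then ticking off, in two parallel tables indexed by $\rho\in \mathcal{S}_3$, which of the $a_\sigma$'s (resp.\ $b_\sigma$'s) contribute to the corresponding coefficient. As a conceptual sanity check, the left-hand side of \eqref{coJacobiSumA3viaS3} is the antisymmetrization of $\textbf{c}_\beta(\Delta_L)$ over the cyclic group $A_3=\{e,(213),(231)\}$, while $\Delta_L$ is itself the antisymmetrization of $\Delta$ over $\{e,\tau\}$; composing these two partial antisymmetrizations over coset representatives must yield the full alternating sum over $\mathcal{S}_3$, with the factor $2$ accounting for the order of $\{e,\tau\}$.
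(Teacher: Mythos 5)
Your proposal is correct and follows essentially the same route as the paper: expand $\textbf{c}_\beta(\Delta_L)$ via \eqref{coassDeltaLviaDelta}, apply $\Phi_{(213)}$ and $\Phi_{(231)}$ using the multiplication in $\mathcal{S}_3$, and collect terms, the only cosmetic difference being that you bookkeep coefficients on the two blocks $(\Delta\otimes\beta)\circ\Delta$ and $(\beta\otimes\Delta)\circ\Delta$ separately while the paper pairs them directly into copies of $\pm\Phi_\sigma\circ\textbf{c}_\beta(\Delta)$. Your claimed totals $+2(-1)^{\epsilon(\rho)}$ and $-2(-1)^{\epsilon(\rho)}$ check out, so the argument goes through.
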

\begin{proof}
By \eqref{coassDeltaLviaDelta}  and multiplication rules in the
group $S_3$, it follows that
\begin{eqnarray} \nonumber \Phi_{(213)} \circ
\textbf{c}_\beta (\Delta_L) &=& \Phi_{(213)} \circ \textbf{c}_\beta
(\Delta)
-\Phi_{(213)}\circ \Phi_{(13)} \circ \textbf{c}_\beta (\Delta)\\
\nonumber && -\Phi_{(213)}\circ\Phi_{(213)} \circ (\beta \otimes
\Delta)\circ \Delta -\Phi_{(213)}\circ\Phi_{(12)} \circ (\Delta
\otimes \beta)\circ \Delta \\ \nonumber &&
+\Phi_{(213)}\circ\Phi_{(23)} \circ (\beta \otimes \Delta) \circ
\Delta +\Phi_{(213)}\circ\Phi_{(231)} \circ ( \Delta \otimes \beta )
\circ \Delta  \\
\label{Phi(213)coassDeltaLviaDelta} &=& \Phi_{(213)} \circ
\textbf{c}_\beta (\Delta)
-\Phi_{(12)}\circ \textbf{c}_\beta (\Delta)\\
\nonumber && -\Phi_{(231)} \circ (\beta \otimes \Delta)\circ \Delta
-\Phi_{(23)} \circ (\Delta \otimes \beta)\circ \Delta \\ \nonumber
&& +\Phi_{(13)} \circ (\beta \otimes \Delta) \circ \Delta + ( \Delta
\otimes \beta ) \circ \Delta, \\
\nonumber \Phi_{(231)} \circ \textbf{c}_\beta (\Delta_L) &=&
\Phi_{(231)} \circ \textbf{c}_\beta (\Delta)
-\Phi_{(231)}\circ \Phi_{(13)} \circ \textbf{c}_\beta (\Delta)\\
\nonumber && -\Phi_{(231)}\circ\Phi_{(213)} \circ (\beta \otimes
\Delta)\circ \Delta -\Phi_{(231)}\circ\Phi_{(12)} \circ (\Delta
\otimes \beta)\circ \Delta \\ \nonumber &&
+\Phi_{(231)}\circ\Phi_{(23)} \circ (\beta \otimes \Delta) \circ
\Delta +\Phi_{(231)}\circ\Phi_{(231)} \circ ( \Delta \otimes \beta )
\circ \Delta  \\
\label{Phi(231)coassDeltaLviaDelta} &=& \Phi_{(231)} \circ
\textbf{c}_\beta (\Delta)
-\Phi_{(23)}\circ \textbf{c}_\beta (\Delta)\\
\nonumber && -(\beta \otimes \Delta)\circ \Delta -\Phi_{(13)} \circ
(\Delta \otimes \beta)\circ \Delta \\ \nonumber && +\Phi_{(12)}
\circ (\beta \otimes \Delta) \circ \Delta + \Phi_{(213)} \circ
(\Delta \otimes \beta ) \circ \Delta.
\end{eqnarray}
After summing up the equalities \eqref{coassDeltaLviaDelta},
\eqref{Phi(213)coassDeltaLviaDelta} and
\eqref{Phi(231)coassDeltaLviaDelta} the terms on the right hand
sides may be pairwise combined into the terms of the form
${(-1)^{\epsilon (\sigma)}\Phi _\sigma \circ \textbf{c}_\beta
(\Delta)}$ with each one being present in the sum twice for all
$\sigma \in S_3$.
\end{proof}

Definition \ref{def:HomLieadmissibleHomcoalg} together with
\eqref{coJacobiSumA3viaS3} yields the following corollary.

\begin{cor}
A triple $( V, \Delta, \beta)$ is a Hom-Lie admissible Hom-coalgebra
if and only if
$$\sum_{\sigma \in \mathcal{S}_3}{(-1)^{\epsilon (\sigma)}\Phi
_\sigma \circ \textbf{c}_\beta (\Delta)}=0
$$
where $(-1)^{\epsilon (\sigma)}$ is the signature of the permutation
$\sigma$.
\end{cor}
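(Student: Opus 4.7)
The plan is to observe that this corollary is essentially a direct translation: Definition \ref{def:HomLieadmissibleHomcoalg} phrases Hom-Lie admissibility as the vanishing of the left-hand side of \eqref{coJacobiSumA3viaS3}, while Proposition \ref{prop:coJacobiSumA3viaS3} identifies that left-hand side with twice the alternating sum $\sum_{\sigma \in \mathcal{S}_3}{(-1)^{\epsilon(\sigma)}\Phi_\sigma \circ \textbf{c}_\beta (\Delta)}$. So the whole argument is a one-line implication.

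More concretely, I would proceed as follows. First, recall from Definition \ref{def:HomLieadmissibleHomcoalg} that $(V,\Delta,\beta)$ is Hom-Lie admissible if and only if
$$\textbf{c}_\beta (\Delta_L)+\Phi_{(213)} \circ \textbf{c}_\beta (\Delta_L)+\Phi_{(231)} \circ \textbf{c}_\beta (\Delta_L) = 0.$$
Next, invoke Proposition \ref{prop:coJacobiSumA3viaS3}, which asserts that this expression equals
$$2\sum_{\sigma \in \mathcal{S}_3}{(-1)^{\epsilon (\sigma)}\Phi_\sigma \circ \textbf{c}_\beta (\Delta)}.$$
Since the ground field $\mathbb{K}$ has characteristic $0$, the factor $2$ is invertible, and thus the vanishing of the Hom-Lie coJacobi sum for $\Delta_L$ is equivalent to the vanishing of the alternating sum $\sum_{\sigma \in \mathcal{S}_3}{(-1)^{\epsilon (\sigma)}\Phi_\sigma \circ \textbf{c}_\beta (\Delta)}$. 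This gives the claimed equivalence.

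There is no genuine obstacle: the hard work has already been carried out in Proposition \ref{prop:coJacobiSumA3viaS3}, where the cancellations among the twelve terms coming from $\textbf{c}_\beta(\Delta_L)$ and its two cyclic permutations were organized into the clean antisymmetrized sum over $\mathcal{S}_3$. The only point worth flagging explicitly in the write-up is the use of $\mathrm{char}(\mathbb{K}) = 0$ to cancel the factor $2$; otherwise the corollary is an immediate rewriting of the definition via the proposition.
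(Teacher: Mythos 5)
Your proposal is correct and follows exactly the paper's route: the authors state that Definition \ref{def:HomLieadmissibleHomcoalg} together with \eqref{coJacobiSumA3viaS3} immediately yields the corollary, which is precisely your one-line argument. Your explicit remark that $\mathrm{char}(\K)=0$ is needed to cancel the factor $2$ is a small but worthwhile addition that the paper leaves implicit.
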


\subsection{$G$-Hom-Coalgebra structures}
In this section we introduce, as in the multiplication case, the
notion of \emph{$G$-Hom-coalgebra} where $G$ is a subgroup of the
symmetric group $\mathcal{S}_3$.
\begin{definition}
Let $G$ be a subgroup of the symmetric group $\mathcal{S}_3$, A
Hom-coalgebra $( V, \Delta, \beta)$ is called
\emph{$G$-Hom-coalgebra} if
\begin{equation}\label{admi}
\sum_{\sigma \in G}{(-1)^{\epsilon (\sigma)}\Phi _\sigma \circ
\textbf{c}_\beta (\Delta)}=0
\end{equation}
where  $(-1)^{\varepsilon ({\sigma})}$ is the signature of the
permutation $\sigma$.
\end{definition}

\begin{proposition}
Let $G$ be a subgroup of the permutations group $\mathcal{S}_3$.
Then any $G$-Hom-Coalgebra $( V, \Delta, \beta)$ is a Hom-Lie
admissible Hom-coalgebra.
\end{proposition}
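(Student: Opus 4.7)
The plan is to reduce the Hom-Lie admissibility condition, as characterized in the preceding corollary, to the $G$-admissibility condition \eqref{admi} by means of a left coset decomposition of $\mathcal{S}_3$ with respect to $G$. Thus the key task is to rewrite
$$\sum_{\sigma \in \mathcal{S}_3}{(-1)^{\epsilon (\sigma)}\Phi _\sigma \circ \textbf{c}_\beta (\Delta)}$$
as a sum, indexed by representatives of the cosets $\mathcal{S}_3/G$, of terms each containing a full $G$-sum.

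First I would record the two elementary multiplicative properties of the ingredients appearing in \eqref{admi}: that $\sigma \mapsto \Phi_\sigma$ is a homomorphism, i.e.\ $\Phi_{\sigma\tau}=\Phi_\sigma\circ\Phi_\tau$, which follows directly from the definition $\Phi_\sigma(x_1\otimes x_2\otimes x_3)=x_{\sigma^{-1}(1)}\otimes x_{\sigma^{-1}(2)}\otimes x_{\sigma^{-1}(3)}$, and that the signature is multiplicative, $(-1)^{\epsilon(\sigma\tau)}=(-1)^{\epsilon(\sigma)}(-1)^{\epsilon(\tau)}$. These are purely group-theoretic and require no computation involving $\Delta$ or $\beta$.

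Next, write $\mathcal{S}_3 = \bigsqcup_{i} \sigma_i G$ as a disjoint union of left cosets, with $\sigma_i$ any choice of representatives. Using the two multiplicativity properties,
$$
\sum_{\sigma\in\mathcal{S}_3}(-1)^{\epsilon(\sigma)}\Phi_\sigma\circ \textbf{c}_\beta(\Delta)
= \sum_i (-1)^{\epsilon(\sigma_i)}\,\Phi_{\sigma_i}\circ\Biggl(\sum_{\tau\in G}(-1)^{\epsilon(\tau)}\Phi_\tau\circ \textbf{c}_\beta(\Delta)\Biggr).
$$
By hypothesis $(V,\Delta,\beta)$ is a $G$-Hom-coalgebra, so the inner parenthesized sum vanishes identically; consequently the whole sum vanishes. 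Applying the corollary that precedes the proposition yields that $(V,\Delta,\beta)$ is a Hom-Lie admissible Hom-coalgebra.

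I do not expect any serious obstacle: the proof is essentially a coset-averaging argument and the only point requiring care is the order in which $\Phi_{\sigma_i}$ is factored out of the sum, which relies on the homomorphism property $\Phi_{\sigma_i\tau}=\Phi_{\sigma_i}\circ\Phi_\tau$ and on the fact that $\Phi_{\sigma_i}$ is a linear map, so it commutes with the finite sum. In particular no normality of $G$ in $\mathcal{S}_3$ is needed, which is fortunate because the nontrivial proper subgroups of $\mathcal{S}_3$ include non-normal ones.
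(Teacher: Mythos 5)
Your proof is correct and follows essentially the same route as the paper: decompose $\mathcal{S}_3$ into left cosets of $G$ and observe that the contribution of each coset is $(-1)^{\epsilon(\sigma_i)}\Phi_{\sigma_i}$ applied to the $G$-sum in \eqref{admi}, which vanishes by hypothesis. If anything, your write-up is more careful than the paper's, which states the coset decomposition (miscalling the cosets ``conjugacy classes'') but leaves the multiplicativity of $\sigma\mapsto\Phi_\sigma$ and of the signature, and the factoring of $\Phi_{\sigma_i}$ out of the inner sum, implicit.
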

\begin{proof}
The skew-symmetry follows straightaway from the
definition. Take the set of conjugacy classes
$\{g G\}_{g\in I}$  where $I\subseteq G$, and for
any $\sigma_1, \sigma_2\in I,\sigma_1 \neq
\sigma_2 \Rightarrow \sigma_1 G\bigcap \sigma_1 G
=\emptyset$. Then

$$\sum_{\sigma\in \mathcal{S}_3}{{(-1)^{\epsilon (\sigma)}}\Phi _\sigma \circ
\textbf{c}_\beta (\Delta)}=\sum_{\sigma_1\in
I}{\sum_{\sigma_2\in \sigma_1 G}{(-1)^{\epsilon
(\sigma)}\Phi _\sigma \circ \textbf{c}_\beta
(\Delta)}}=0.$$
\end{proof}

The subgroups of $\mathcal{S}_3$ are
$$G_1=\{Id\}, ~G_2=\{Id,\tau_{1 2}\},~G_3=\{Id,\tau_{2
3}\},$$ $$~G_4=\{Id,\tau_{1 3}\},~G_5=A_3 ,~G_6=\mathcal{S}_3,$$
where $A_3$ is the alternating group and where $\tau_{ij}$ is the
transposition between $i$ and $j$.

We obtain the following type of Hom-Lie-admissible Hom-coalgebras.
\begin{itemize}
\item The  $G_1$-Hom-coalgebras  are the Hom-associative
coalgebras defined above.

\item The  $G_2$-Hom-coalgebras satisfy the condition
\begin{equation}\nonumber
\textbf{c}_\beta (\Delta)+
\Phi_{(12)}\textbf{c}_\beta (\Delta)=0.
\end{equation}

\item The  $G_3$-Hom-coalgebras satisfy the condition
\begin{equation}\nonumber
\textbf{c}_\beta (\Delta)+
\Phi_{(23)}\textbf{c}_\beta (\Delta)=0.
\end{equation}

\item The  $G_4$-Hom-coalgebras satisfy the condition
\begin{equation}\nonumber
\textbf{c}_\beta (\Delta)+
\Phi_{(13)}\textbf{c}_\beta (\Delta)=0.
\end{equation}
\item The  $G_5$-Hom-coalgebras satisfy the condition
\begin{eqnarray}\nonumber
\textbf{c}_\beta (\Delta)+
\Phi_{(213)}\textbf{c}_\beta (\Delta)+
\Phi_{(231)}\textbf{c}_\beta (\Delta)=0.
\end{eqnarray}\nonumber
If the product $\mu$ is skewsymmetric then the previous condition is
exactly the Hom-Jacobi identity.
\item The  $G_6$-Hom-coalgebras are the Hom-Lie-admissible
coalgebras.
\end{itemize}

The $G_2$-Hom-coalgebras may be called
Vinberg-Hom-coalgebra and $G_3$-Hom-coalgebras
may be called preLie-Hom-coalgebras.

\begin{definition}
A \emph{Vinberg-Hom-coalgebra} is a triple $(V, \Delta, \beta)$
consisting of a linear space $V$, a linear map $\mu: V \rightarrow
V\times V$ and a homomorphism $\beta$ satisfying
\begin{equation}\nonumber
\textbf{c}_\beta (\Delta)+
\Phi_{(12)}\textbf{c}_\beta (\Delta)=0.
\end{equation}
\end{definition}

\begin{definition}
A \emph{preLie-Hom-coalgebra} is a triple $(V, \Delta, \beta)$
consisting of a linear space $V$, a linear map $\mu: V \rightarrow
V\times V$ and a homomorphism $\beta$ satisfying
\begin{equation}\nonumber
\textbf{c}_\beta (\Delta)+
\Phi_{(23)}\textbf{c}_\beta (\Delta)=0.
\end{equation}
\end{definition}

More generally, by dualization we have a
correspondence between $G$-Hom-associative
algebras introduced in \cite{MS} and
$G$-Hom-coalgebras for a subgroup $G$ of
$\mathcal{S}_3$.

Let $G$ be a subgroup  of $\mathcal{S}_3$ and $(V, \mu, \alpha)$ be
a $G$-Hom-associative  algebra that is $\mu : V\otimes V \rightarrow
V$ and $ \alpha : V \rightarrow V$ are linear maps and the following
condition is satisfied
\begin{equation}\label{G_ass}
\sum_{\sigma\in G}{(-1)^{\varepsilon ({\sigma})}a_{\alpha,\mu}\circ
\Phi_\sigma}=0.
\end{equation}
where $a_{\alpha,\mu}$ is the $\alpha$-associator that is
$a_{\alpha,\mu}= \mu \circ(\mu  \otimes \alpha)
-\mu\circ(\alpha\otimes \mu) $

Setting $$(\mu \otimes \alpha)_G =\sum_{\sigma\in
G}{(-1)^{\varepsilon ({\sigma})}(\mu \otimes
\alpha)\circ \Phi_\sigma}\  \text{ and } \
(\alpha \otimes \mu )_G =\sum_{\sigma\in
G}{(-1)^{\varepsilon ({\sigma})}(\alpha \otimes
\mu )\circ \Phi_\sigma}$$ the condition
\eqref{G_ass} is equivalent to the following
commutative diagram

$$
\begin{array}{ccc}
V\otimes V\otimes V & \stackrel{(\mu \otimes
\alpha)_G}{\longrightarrow } & V\otimes
V \\
\ \quad \left\downarrow ^{(\alpha \otimes \mu )_G}\right. &  & \quad
\left\downarrow
^\mu \right. \\
V\otimes V & \stackrel{\mu }{\longrightarrow } & V
\end{array}
$$

By the dualization of the square one may obtain the following
commutative diagram

$$
\begin{array}{ccc}
 V & \stackrel{\Delta}{\longrightarrow
} & V\otimes
V \\
\ \quad \left\downarrow ^{ \Delta}\right. &  & \quad \left\downarrow
^{(\beta\otimes\Delta)_G}  \right. \\
V\otimes V & \stackrel{(\Delta \otimes \beta)_G}{\longrightarrow } &
V\otimes V\otimes V
\end{array}
$$

where $$ (\beta\otimes\Delta)_G=\sum_{\sigma\in
G}{(-1)^{\varepsilon ({\sigma})}\Phi_\sigma \circ
(\beta \otimes\Delta )}  \ \text{ and }\ (\Delta
\otimes \beta)_G=\sum_{\sigma\in
G}{(-1)^{\varepsilon ({\sigma})}\Phi_\sigma \circ
(\Delta\otimes\beta)}.
$$

The previous commutative diagram expresses that $(V, \Delta, \beta)$
is a $G$-Hom-coalgebra. More precisely we have the following
connection between $G$-Hom-coalgebras and $G$-Hom-associative
algebras.

\begin{proposition}
Let  $\left( V,\Delta ,\beta \right) $ be a $G$-Hom-coalgebra where
$G$ is a subgroup of $\mathcal{S}_3$. Its dual vector space
$V^\star$ is provided with a $G$-Hom-associative algebra  $\left(
V^\star,\Delta^\star ,\beta^\star \right) $ where $\Delta^\star
,\beta^\star$ are the transpose map.
\end{proposition}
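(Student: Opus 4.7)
The plan is to translate the $G$-Hom-coalgebra condition into the $G$-Hom-associative condition via the natural duality between linear maps and their transposes, after fixing the identification $(V \otimes V)^\star \cong V^\star \otimes V^\star$ (and likewise for the triple tensor product), which is canonical in the finite-dimensional case. So first I would set $\mu := \Delta^\star$, viewed as a map $V^\star \otimes V^\star \to V^\star$ via this identification, and set $\alpha := \beta^\star$. The problem then reduces to showing that the $G$-condition $\sum_{\sigma \in G} (-1)^{\epsilon(\sigma)} a_{\alpha,\mu} \circ \Phi_\sigma = 0$ on $V^\star$ is precisely the transpose of the defining relation \eqref{admi} for $(V,\Delta,\beta)$.

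The two key identifications that need to be established are the following. First, using $(f \circ g)^\star = g^\star \circ f^\star$ and the fact that taking transposes commutes with tensor products, one obtains
\begin{equation*}
\bigl((\Delta \otimes \beta)\circ \Delta\bigr)^\star = \Delta^\star \circ (\Delta^\star \otimes \beta^\star) = \mu \circ (\mu \otimes \alpha),
\end{equation*}
and analogously $((\beta \otimes \Delta)\circ \Delta)^\star = \mu \circ (\alpha \otimes \mu)$. Subtracting yields $\mathbf{c}_\beta(\Delta)^\star = a_{\alpha,\mu}$. Second, a short direct computation on decomposable tensors, using the defining pairing $(f_1 \otimes f_2 \otimes f_3)(x_1 \otimes x_2 \otimes x_3) = f_1(x_1)f_2(x_2)f_3(x_3)$, shows that
\begin{equation*}
\Phi_\sigma^\star(f_1 \otimes f_2 \otimes f_3) = f_{\sigma(1)} \otimes f_{\sigma(2)} \otimes f_{\sigma(3)} = \Phi_{\sigma^{-1}}(f_1 \otimes f_2 \otimes f_3),
\end{equation*}
so that $\Phi_\sigma^\star = \Phi_{\sigma^{-1}}$ under our identification.

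With these two facts in hand, I would apply the transpose to the identity $\sum_{\sigma \in G} (-1)^{\epsilon(\sigma)} \Phi_\sigma \circ \mathbf{c}_\beta(\Delta) = 0$, obtaining
\begin{equation*}
\sum_{\sigma \in G} (-1)^{\epsilon(\sigma)}\, a_{\alpha,\mu} \circ \Phi_{\sigma^{-1}} = 0.
\end{equation*}
To finish, I would use that $G$ is a subgroup of $\mathcal{S}_3$, hence closed under inversion, and that $\epsilon(\sigma) = \epsilon(\sigma^{-1})$, so the re-indexing $\sigma \mapsto \sigma^{-1}$ converts this sum into exactly \eqref{G_ass}, which is the $G$-Hom-associativity of $(V^\star, \mu, \alpha)$.

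The substantive step is really the bookkeeping of the permutation action on dual tensors (getting $\Phi_\sigma^\star = \Phi_{\sigma^{-1}}$ with the correct convention), together with the justification of the identification $V^\star \otimes V^\star \cong (V \otimes V)^\star$; the latter is automatic in finite dimensions and is the implicit setting of the proposition. Once these identifications are in place, the remainder is formal manipulation of transposes and does not require any new computation beyond what was already done in the proof of Proposition \ref{prop:coJacobiSumA3viaS3}.
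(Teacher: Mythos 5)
Your proof is correct and follows essentially the same route as the paper: both arguments come down to the duality $\textbf{c}_\beta(\Delta)^\star = a_{\alpha,\mu}$ under the identifications $\mu=\Delta^\star\circ\lambda_2$ and $\alpha=\beta^\star$, which the paper establishes by evaluating on elements via $\mu_\K$ and the embedding $\lambda_3:(V^\star)^{\otimes 3}\to (V^{\otimes 3})^\star$. Your explicit bookkeeping of the permutations, namely $\Phi_\sigma^\star=\Phi_{\sigma^{-1}}$ followed by re-indexing over the subgroup $G$ using $\epsilon(\sigma)=\epsilon(\sigma^{-1})$, actually fills in a step the paper leaves implicit; note only that finite-dimensionality is not needed for this direction, since the canonical embedding $V^\star\otimes V^\star\hookrightarrow (V\otimes V)^\star$ (rather than an isomorphism) already suffices to restrict the transposed identity to $(V^\star)^{\otimes 3}$.
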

\begin{proof}
Let  $\left( V,\Delta , \beta \right) $ be a $G$-Hom-coalgebra. Let
$V^{\star}$ be the  dual space of $V$ ($V^{\star}=Hom(V,\K)$).

Consider the map
$$\lambda_n : (V^{\star})^{\otimes n}\longrightarrow (V^{\star})^{\otimes n}
$$
$$ f_1\otimes \cdots \otimes f_n \longrightarrow  \lambda_n (f_1\otimes \cdots \otimes f_n)$$

such that for $v_1 \otimes \cdots \otimes  v_n\in V^{\otimes n}$
$$\lambda_n (f_1\otimes \cdots \otimes f_n)(v_1 \otimes \cdots
\otimes v_n)=f_1(v_1) \otimes \cdots \otimes f_n(v_n)$$

and set $$ \mu :=\Delta^{\star}\circ \lambda_2 \quad \quad \alpha :=
 \beta ^{ \ast}$$
where the star $\star$ denotes the transpose linear map. Then, the
quadruple $\left( V^{\star},\mu ,\eta , \alpha\right) $ is a
 $G$-Hom-associative algebra. Indeed,  $\mu (f_1,f_2)=\mu_\K \circ
 \lambda_2 (f_1\otimes f_2) \circ \Delta $ where $\mu_\K$ is the multiplication of $\K$ and $f_1,f_2 \in
 V^{\ast}$. One has
\begin{eqnarray}\mu \circ ( \mu \otimes \alpha) (f_1 \otimes f_2
\otimes f_3 )&=& \mu
 (\mu (f_1 \otimes f_2) \otimes \alpha (f_3)) \nonumber \\ &=&\mu_\K \circ \lambda_2(\mu (f_1 \otimes f_2) \otimes \alpha
 (f_3))\circ \Delta \nonumber \\ &=& \mu_\K \circ \lambda_2 (\lambda_2( (f_1\otimes f_2) \circ
 \Delta)\otimes \alpha
 (f_3))\circ \Delta \nonumber  \\ &=&
 \mu_\K \circ (\mu_\K \otimes id ) \circ \lambda_3 (f_1 \otimes f_2
 \otimes f_3) \circ (\Delta \otimes \beta )\circ
 \Delta . \nonumber
\end{eqnarray}
Similarly
$$\mu \circ ( \alpha \otimes \mu ) (f_1 \otimes f_2
\otimes f_3 )= \mu_\K \circ ( id\otimes  \mu_\K)
\circ \lambda_3 (f_1 \otimes f_2 \otimes f_3)
\circ ( \beta\otimes\Delta  )\circ \Delta.$$
 Using the associativity and the commutativity of $\mu_\K$, the
$\alpha$-associator may be written as
 $$a_{\alpha , \mu}=\mu_\K \circ ( id\otimes  \mu_\K) \circ \lambda_3 (f_1
\otimes f_2
 \otimes f_3)\circ(  ( \Delta\otimes\beta  )\circ
 \Delta- ( \beta\otimes\Delta  )\circ
 \Delta).
 $$

 Then we have the following connection between the
 $\alpha$-associator and $\beta$-coassociator
$$a_{\alpha , \mu}=\mu_\K \circ ( id\otimes  \mu_\K) \circ \lambda_3 (f_1
\otimes f_2
 \otimes f_3)\circ \textbf{c}_\beta (\Delta).
 $$
 Therefore if $\left( V,\Delta ,\beta \right) $ is a $G$-Hom-coalgebra, then the   $\left(
V^\star,\Delta^\star ,\beta^\star \right) $ is a $G$-Hom-associative
algebra.

\end{proof}

\begin{proposition}
Let  $\left( V,\mu ,\alpha \right) $ be a finite dimensional
$G$-Hom-associative algebra where $G$ is a subgroup of
$\mathcal{S}_3$. Its dual vector space $V^\star$ is provided with a
$G$-Hom-coalgebra  $\left( V^\star,\mu^\star ,\alpha^\star \right)
$, where $\mu^\star ,\alpha^\star$ are the transpose map.
 \end{proposition}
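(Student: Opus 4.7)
The plan is to mirror the dualization argument of the preceding proposition, exploiting the finite-dimensionality hypothesis to identify $(V\otimes V)^\star$ with $V^\star\otimes V^\star$ and, more generally, $(V^{\otimes n})^\star$ with $(V^\star)^{\otimes n}$ as $\mathbb{K}$-vector spaces. Under this identification the transpose of the multiplication defines a comultiplication $\Delta := \mu^\star : V^\star \to V^\star\otimes V^\star$, and we set $\beta := \alpha^\star$, obtaining a Hom-coalgebra $(V^\star,\Delta,\beta)$ on which we need to verify the $G$-Hom-coalgebra condition.

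First I would show that under these identifications the $\beta$-coassociator of $\Delta$ coincides with the transpose of the $\alpha$-associator of $\mu$. Using contravariance of transposition with respect to composition and tensor product, one has $(\mu\circ(\mu\otimes\alpha))^\star = (\mu^\star\otimes\alpha^\star)\circ \mu^\star = (\Delta\otimes\beta)\circ\Delta$ and similarly $(\mu\circ(\alpha\otimes\mu))^\star = (\beta\otimes\Delta)\circ\Delta$, so that
\begin{equation*}
(a_{\alpha,\mu})^\star \;=\; \textbf{c}_\beta(\Delta).
\end{equation*}

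Second I would check compatibility of transposition with the permutation actions. A direct evaluation on elementary tensors $f_1\otimes f_2\otimes f_3$ and $x_1\otimes x_2\otimes x_3$, together with the change of index $j=\sigma^{-1}(i)$, shows that the transpose of $\Phi_\sigma$ on $V^{\otimes 3}$ is exactly $\Phi_{\sigma^{-1}}$ on $(V^\star)^{\otimes 3}$. Combined with the previous step, taking transposes of the $G$-Hom-associativity identity $\sum_{\sigma\in G}(-1)^{\varepsilon(\sigma)}\,a_{\alpha,\mu}\circ\Phi_\sigma = 0$ yields
\begin{equation*}
\sum_{\sigma\in G}(-1)^{\varepsilon(\sigma)}\,\Phi_{\sigma^{-1}}\circ \textbf{c}_\beta(\Delta) \;=\; 0.
\end{equation*}

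Finally, because $G$ is a \emph{subgroup} of $\mathcal{S}_3$, the map $\sigma\mapsto\sigma^{-1}$ is a bijection from $G$ onto itself, and since $\varepsilon(\sigma)=\varepsilon(\sigma^{-1})$, a relabeling $\tau=\sigma^{-1}$ converts the previous identity into $\sum_{\tau\in G}(-1)^{\varepsilon(\tau)}\Phi_\tau\circ \textbf{c}_\beta(\Delta)=0$, which is exactly the defining condition \eqref{admi} for $(V^\star,\mu^\star,\alpha^\star)$ to be a $G$-Hom-coalgebra. I do not anticipate a real obstacle; the only delicate point is correctly tracking the inversion in $\Phi_\sigma^\star=\Phi_{\sigma^{-1}}$ and invoking the subgroup property to absorb that inversion at the last step.
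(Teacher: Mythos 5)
Your proof is correct, and it takes a genuinely different route from the paper's. The paper argues in coordinates: it fixes a basis $\{e_1,\dots,e_n\}$, writes $\mu(e_i\otimes e_j)=\sum_k C_{ij}^k e_k$ and $\alpha(e_i)=\sum_k\alpha_i^k e_k$, computes $\Delta(e_k^*)=\sum_{i,j}C_{ij}^k\, e_i^*\otimes e_j^*$, expands the $G$-Hom-coassociativity condition \eqref{admi} on basis vectors into a system of equations in the structure constants, and identifies that system with the $G$-Hom-associativity condition \eqref{G_ass} after the substitution $p=\sigma(p')$, $q=\sigma(q')$, $s=\sigma(s')$. You instead run the argument functorially: the identities $\bigl(a_{\alpha,\mu}\bigr)^\star=\textbf{c}_\beta(\Delta)$ and $\Phi_\sigma^\star=\Phi_{\sigma^{-1}}$, followed by the observation that inversion permutes $G$ and preserves the signature. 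The two proofs handle the same inversion subtlety --- your relabeling $\tau=\sigma^{-1}$ is exactly the paper's reindexing $p=\sigma(p')$ --- but your version makes explicit two things the paper leaves implicit: where finite-dimensionality enters (only in the identification $(V^{\otimes n})^\star\cong(V^\star)^{\otimes n}$, without which $\mu^\star$ would not land in $V^\star\otimes V^\star$) and where the subgroup hypothesis is genuinely used (closure of $G$ under inversion, without which the final relabeling would fail). The paper's coordinate computation, on the other hand, produces the explicit structure-constant equations that are reused for the low-dimensional examples later in the text. Both are valid; yours is the more structural and arguably more transparent argument.
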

\begin{proof}
Let $\A=\left( V,\mu,\alpha  \right) $ be a $n$-dimensional
Hom-associative algebra ($n$ finite). Let $\{e_1,\cdots,e_n\}$ be a
basis of $V$ and $\{e^*_1,\cdots,e^*_n\}$ be the dual basis. Then
$\{e^*_i \otimes e^*_j\}_{i,j}$ is a basis of $\A^\star \otimes
\A^\star$. The comultiplication $\Delta=\mu^\star$ on  $\A^\ast$ is
defined for $f\in \A^\star$ by
$$\Delta (f)=\sum_{i,j=1}^{n}{f(\mu(e_i \otimes e_j))\ e^*_i \otimes e^*_j }
$$
Setting $\mu(e_i \otimes e_j)=\sum_{k=1}^n {C_{ij}^k e_k}$ and
$\alpha(e_i)=\sum_{k=1}^n {\alpha_{i}^k e_k}$, then $\Delta
(e^*_k)=\sum_{i,j=1}^n{C_{ij}^k\ e^*_i \otimes e^*_j } $ and
$\beta(e_i)=\alpha^\star(e_i) =\sum_{k=1}^n {\alpha_{k}^i e_k}$.

The condition \eqref{admi} of $G$-Hom-coassociativity of  $\Delta$,
applied to any element $e^*_k$ of the basis, is equivalent to
$$\sum_{p,q,s=1}^{n}{\sum_{\sigma\in G}{(-1)^{\epsilon (\sigma)} (\sum_{i,j=1}^{n}{\alpha_s^j C_{ij}^k C_{pq}^i -
\alpha _p^i C_{ij}^k C_{qs}^j})}e^*_{\sigma ^{-1}(p)} \otimes
e^*_{\sigma ^{-1}(q)}\otimes e^*_{\sigma {-1}(s)}}=0
$$
Therefore $\Delta$ is $G$-Hom-coassociative if for  any
$p,q,s,k\in\{1,\cdots, n\}$ one has
$$\sum_{\sigma\in G}{(-1)^{\epsilon (\sigma)} (\sum_{i,j=1}^{n}{\alpha_s^j C_{ij}^k C_{pq}^i -
\alpha _p^i C_{ij}^k C_{qs}^j})}=0$$ The previous system is exactly
the condition \eqref{G_ass} of $G$-Hom-associativity of $\mu$,
written on $e_{p'}\otimes e_{q'}\otimes e_{s'}$ and setting
$p=\sigma (p'),\ q=\sigma (q'),\ s=\sigma (s')$.
\end{proof}
\begin{cor}
The dual vector space of a Hom-coassociative
coalgebra $\left( V,\Delta ,\beta,\varepsilon
\right) $ is a  Hom-associative algebra $\left(
V^\star,\Delta^\star
,\beta^\star,\varepsilon^\star \right) $, where
$V^\star$ is the dual vector space and the star
for the linear maps denotes the transpose map.
The dual vector space of finite-dimensional
Hom-associative algebra is a Hom-coassociative
coalgebra.
\end{cor}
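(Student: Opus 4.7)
The plan is to observe that both statements in the corollary are special cases of the two preceding propositions, with $G$ taken to be the trivial subgroup $G_1=\{\mathrm{Id}\}$. For $G=G_1$, the $G$-Hom-coassociativity condition \eqref{admi} reduces to $\textbf{c}_\beta(\Delta)=0$, which is exactly the Hom-coassociativity axiom (C1); dually, $G_1$-Hom-associativity coincides with ordinary Hom-associativity \eqref{Hom-ass}. So the multiplicative/comultiplicative parts of both directions follow immediately by specializing the two propositions above to this choice of subgroup.

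The one ingredient not already supplied by those propositions is the compatibility of the counit with the unit under dualization. I would transpose the counit diagrams (C2) directly: using the canonical identifications $\K^\star\cong\K$ and $V^\star\otimes\K\cong(V\otimes\K)^\star$, the transpose of $(\mathrm{id}\otimes\varepsilon)\circ\Delta=\mathrm{id}$ becomes the right unit axiom $\mu\circ(\mathrm{id}\otimes\varepsilon^\star)=\mathrm{id}$ for the induced multiplication $\mu=\Delta^\star\circ\lambda_2$, and symmetrically the identity $(\varepsilon\otimes\mathrm{id})\circ\Delta=\mathrm{id}$ transposes to the left unit axiom. This identifies $\eta:=\varepsilon^\star$ as the unit of the dual Hom-associative algebra, and in the finite-dimensional converse one takes $\varepsilon:=\eta^\star$ and dualizes the unit diagrams the same way.

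A small bookkeeping point deserves attention: the first half of the corollary (dual of a Hom-coassociative coalgebra is Hom-associative) holds without any finite-dimensionality hypothesis, since the multiplication $\mu=\Delta^\star\circ\lambda_2$ is well defined on $V^\star\otimes V^\star$ via the canonical embedding $\lambda_2:V^\star\otimes V^\star\to(V\otimes V)^\star$ used in the previous proof. Finite-dimensionality is genuinely required only for the second half, where $\lambda_2$ must be inverted in order to produce a comultiplication $\mu^\star:V^\star\to V^\star\otimes V^\star$. Beyond this, the main obstacle is purely organizational; the substantive content is already isolated in the two propositions just established, and what remains is to package the $G=G_1$ case together with the elementary counit/unit transposition.
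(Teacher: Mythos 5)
Your proposal is correct and follows exactly the paper's route: the paper's own proof is the single sentence that the corollary is the special case $G=G_1$ of the two preceding propositions. Your additional remarks on transposing the counit diagrams and on where finite-dimensionality is genuinely needed are sound elaborations of details the paper leaves implicit.
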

\begin{proof}It is a particular case of the previous Propositions ($G=G_1$).
\end{proof}
\section{Hom-Hopf algebras}
In this section, we introduce a generalization of Hopf algebras and
show some relevant properties of the new structure. We also define
the module and comodule structure over Hom-associative algebra or
 Hom-coassociative coalgebra. Let $\mathbb{K}$ be an
algebraically closed field of characteristic $0$ and $V$ be a vector
space over $\mathbb{K}$.
\begin{definition}
A \emph{Hom-bialgebra} is a quintuple $\left( V,\mu ,\alpha,\eta
,\Delta ,\beta,\varepsilon \right) $ where

(B1)\qquad $\left( V,\mu ,\alpha,\eta \right) $ is a Hom-associative
algebra

(B2) $\qquad \left( V,\Delta ,\beta,\varepsilon \right) $ is a
Hom-coassociative coalgebra

(B3)\qquad The linear maps $\Delta $ and $\varepsilon $ are
morphisms of  algebras $\left( V,\mu,\alpha ,\eta \right) $.

\end{definition}
\begin{remark}
The condition (B3) could be expressed by the
following system:

$$
\left\{
\begin{array}{l}
\Delta \left( e_1\right) =e_1\otimes e_1\qquad \text{where}\
e_1=\eta \left( 1\right) \\ \Delta \left( \mu(x\otimes
y)\right)=\Delta \left( x \right) \bullet \Delta \left( y\right)
=\sum_{(x)(y)}\mu(x^{(1)}\otimes y^{\left( 1\right) })\otimes \mu(
x^{\left( 2\right)
}\otimes y^{\left( 2\right) })\qquad \\
\varepsilon \left( e_1\right) =1 \\
\varepsilon \left( \mu(x\otimes y)\right) =\varepsilon \left(
x\right) \varepsilon \left( y\right)
\end{array}
\right.
$$
where the bullet $\bullet$ denotes the multiplication on tensor
product and by  using the Sweedler's notation $\Delta \left(
x\right) =\sum_{(x)}x^{(1)}\otimes x^{\left( 2\right) }$. If there
is no ambiguity we denote the multiplication by a dot.
\end{remark}

\begin{remark}
One can consider a more restrictive definition where  linear maps
$\Delta $ and $\varepsilon $ are morphisms of Hom-associative
algebras that is the condition (B3) becomes equivalent to
$$
\left\{
\begin{array}{l}
\Delta \left( e_1\right) =e_1\otimes e_1\qquad \text{where}\
e_1=\eta \left( 1\right) \\ \Delta \left( \mu(x\otimes
y)\right)=\Delta \left( x \right) \bullet \Delta \left( y\right)
=\sum_{(x)(y)}\mu(x^{(1)}\otimes y^{\left( 1\right) })\otimes \mu(
x^{\left( 2\right)
}\otimes y^{\left( 2\right) })\qquad \\
\varepsilon \left( e_1\right) =1 \\
\varepsilon \left( \mu(x\otimes y)\right) =\varepsilon \left(
x\right) \varepsilon \left( y\right)\\
\Delta \left( \alpha(x)\right)=\sum_{(x)}{\alpha(x^{(1)})\otimes
\alpha( x^{\left( 2\right) })}\\
\varepsilon\circ \alpha \left( x\right) =\varepsilon \left( x\right)
\end{array}
\right.
$$
\end{remark}
 Given a Hom-bialgebra $\left( V,\mu ,\alpha,\eta ,\Delta,\beta ,\varepsilon
\right) $, we show that the vector space $Hom \left( V,V \right)$
with the multiplication given by the convolution product carries a
structure of Hom-algebra.
\begin{proposition}
Let  $\left( V,\mu ,\alpha,\eta ,\Delta,\beta ,\varepsilon \right) $
be a Hom-bialgebra. Then the algebra $Hom \left( V,V \right)$ with
the multiplication given by the convolution product defined by
$$ f \star g=\mu \circ \left( f\otimes g \right) \circ\Delta $$
and the unit being $\eta \circ \epsilon$ is a Hom-associative
algebra with the homomorphism map defined by $\gamma (f)=\alpha
\circ f \circ \beta$.
\end{proposition}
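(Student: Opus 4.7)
The plan is to check the two defining axioms of a unital Hom-associative algebra for the quadruple $(\mathrm{Hom}(V,V), \star, \gamma, \eta\circ\varepsilon)$: the twisted associativity $(f\star g)\star \gamma(h) = \gamma(f)\star (g\star h)$ and the two-sided unit property for $\eta\circ\varepsilon$. No new tool beyond the Hom-bialgebra axioms is needed; the argument is one of careful bookkeeping in the category of vector spaces.

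First, for Hom-associativity, I would expand both sides using only the definitions of $\star$ and $\gamma$ together with the bifunctoriality identity $(A\otimes C)\circ (B\otimes D) = (A\circ B)\otimes (C\circ D)$ for composable linear maps. A direct computation yields
\[
(f\star g)\star \gamma(h)=\mu\circ (\mu\otimes\alpha)\circ (f\otimes g\otimes h)\circ (\Delta\otimes\beta)\circ\Delta,
\]
\[
\gamma(f)\star (g\star h)=\mu\circ (\alpha\otimes\mu)\circ (f\otimes g\otimes h)\circ (\beta\otimes\Delta)\circ\Delta,
\]
once the canonical identification $(f\otimes g)\otimes h = f\otimes (g\otimes h)$ is made on $V^{\otimes 3}$. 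Equality of the two expressions then follows directly from Hom-associativity of $\mu$ (axiom (B1), $\mu\circ(\mu\otimes\alpha)=\mu\circ(\alpha\otimes\mu)$) together with Hom-coassociativity of $\Delta$ (axiom (B2), $(\Delta\otimes\beta)\circ\Delta=(\beta\otimes\Delta)\circ\Delta$).

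Second, for the unit, I would unfold
\[
(\eta\circ\varepsilon)\star f = \mu\circ\bigl((\eta\circ\varepsilon)\otimes f\bigr)\circ\Delta = \mu\circ (\eta\otimes\mathrm{id}_V)\circ (\varepsilon\otimes f)\circ\Delta.
\]
The counit axiom (C2) collapses $(\varepsilon\otimes\mathrm{id}_V)\circ\Delta$ to $\mathrm{id}_V$ under the canonical isomorphism $\mathbb{K}\otimes V\cong V$, so the right-hand side reduces to $\mu\circ (\eta\otimes\mathrm{id}_V)\circ (\mathrm{id}_{\mathbb{K}}\otimes f)$, which equals $f$ by the left unit axiom for $\mu$. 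The symmetric equality $f\star (\eta\circ\varepsilon)=f$ follows from the corresponding right-hand counit and unit diagrams.

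The only real obstacle is notational: keeping disentangled the multiple roles of $\otimes$ (between maps, between spaces, between scalars and vectors) and applying the interchange law in the correct direction at each step. Once that bookkeeping is done, everything reduces algebraically to the axioms (B1)--(B2) and the unit/counit diagrams already built into the Hom-bialgebra, so no extra structural identity is invoked.
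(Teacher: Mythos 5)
Your proposal is correct and follows essentially the same route as the paper: both expand $\gamma(f)\star(g\star h)$ and $(f\star g)\star\gamma(h)$ into $\mu\circ(\alpha\otimes\mu)\circ(f\otimes g\otimes h)\circ(\beta\otimes\Delta)\circ\Delta$ and $\mu\circ(\mu\otimes\alpha)\circ(f\otimes g\otimes h)\circ(\Delta\otimes\beta)\circ\Delta$ respectively, and conclude by the Hom-associativity of $\mu$ together with the Hom-coassociativity of $\Delta$. Your treatment of the unit is just a spelled-out version of what the paper dismisses as ``the unitality is as usual.''
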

\begin{proof}
Let $f,g,h\in Hom \left( V,V \right)$.
\begin{eqnarray*}
  \gamma (f) *( g*h)) &=& \mu \circ \left( \gamma (f)\otimes ( g*h) \right) \Delta  \\
   &=& \mu \circ \left( \gamma (f)\otimes ( \mu \circ \left( g\otimes h \right)\circ \Delta) \right) \Delta \\
   &=& \mu \circ \left( \alpha \otimes
   \mu \right)\circ
   \left( f  \otimes g\otimes h \right)
   \circ \left( \beta \otimes \Delta) \right)
   \Delta .
\end{eqnarray*}
Similarly
$$(f * g)*\gamma (h)=\mu \circ
\left(  \mu \otimes \alpha \right)\circ \left( f
\otimes g\otimes h \right)\circ \left( \Delta
\otimes \beta) \right) \Delta .
$$
Then, the Hom-associativity of $\mu$ and a Hom-coassociativity of
$\Delta$ lead to the Hom-associativity of the convolution product.
The unitality is as usual.
\end{proof}
\begin{definition}
 An endomorphism $S$ of $V$ is said to be
an\emph{ antipode} if it is  the inverse of the identity over $V$
for the Hom-algebra $Hom \left( V,V \right)$ with the multiplication
given by the convolution product defined by
$$ f \star g=\mu \circ
\left( f\otimes g \right) \Delta $$ and the unit
being $\eta \circ \epsilon$.
\end{definition}

The condition being antipode may be expressed by
the condition:
$$
\mu \circ S\otimes Id\circ \Delta = \mu \circ
Id\otimes S\circ \Delta =\eta \circ \varepsilon .
$$
\begin{definition}
A {\it Hom-Hopf algebra }is a Hom-bialgebra with an antipode.
\end{definition}
Then, a Hom-Hopf algebra over a $\mathbb{K}$-vector space $V$ is
given by $$\HH=(V,\mu ,\alpha,\eta ,\Delta, \beta ,\varepsilon ,S)$$

where the following homomorphisms
$$\mu :V\otimes V\rightarrow, \quad \eta :\mathbb{K}\rightarrow V,\quad \alpha :V\rightarrow V$$
$$\Delta :V\rightarrow V\otimes V, \quad \varepsilon :V\rightarrow \mathbb{K},\quad\beta :V\rightarrow V$$
$$S :V\rightarrow \mathbb{K}$$
satisfy the following conditions
\begin{enumerate}
\item $(V,\mu,\alpha ,\eta )$ is a unital  Hom-associative algebra.
\item $(V,\Delta, \beta ,\varepsilon)$ is a counital Hom-coalgebra.
\item $\Delta $ and $\varepsilon $ are morphisms of
algebras, which translate to
$$
\left\{
\begin{array}{l}
\Delta \left( e_1\right) =e_1\otimes e_1\qquad \text{where}\
e_1=\eta \left( 1\right) \\ \Delta \left( x\cdot y\right)=\Delta
\left( x \right) \bullet \Delta \left( y\right)
=\sum_{(x)(y)}x^{(1)}\cdot y^{\left( 1\right) }\otimes x^{\left(
2\right)
}\cdot y^{\left( 2\right) }\qquad \\
\varepsilon \left( e_1\right) =1 \\
\varepsilon \left( x\cdot y\right) =\varepsilon \left( x\right)
\varepsilon \left( y\right)
\end{array}
\right.
$$
\item $S$ is the antipode,  so
$$\mu \circ S\otimes Id\circ \Delta =\mu \circ Id\otimes S\circ \Delta =\eta
\circ \varepsilon .
$$
\end{enumerate}
\begin{remark} Let $V$ be a finite-dimensional $\K$-vector space.
If $H=(V,\mu ,\alpha,\eta ,\Delta ,
\beta,\varepsilon ,S)$ is a Hom-Hopf algebra,
then
$$H^*=(V^*,\Delta^* ,\beta^*,\varepsilon^*, \mu^*
, \alpha^*,\eta^* , S^*)$$ is also a Hom-Hopf
algebra.
\end{remark}

\subsection{Primitive elements and Generalized Primitive elements}

In the following, we discuss the properties of primitive elements in
a Hom-bialgebra.

Let  $\HH=(V,\mu,\alpha ,\eta ,\Delta, \beta ,\varepsilon)$ be a
Hom-bialgebra and $e_1=\eta (1)$ be the unit.

\begin{definition}
An element $x\in \HH$ is called primitive if $\Delta (x)=e_1\otimes
x + x\otimes e_1$.
\end{definition}
Let $x\in \HH$ be a primitive element, the coassociativity of
$\Delta$ implies
$$(\beta\otimes\Delta)\circ\Delta(x)=\tau_{13}\circ(\Delta\otimes\beta)\circ\Delta(x)
$$
where $\tau_{13}$ is a permutation in the symmetric group
$\mathcal{S}_3$.
\begin{lem}
Let $x$ be a primitive element in $\HH$, then $\varepsilon(x)=0$.
\end{lem}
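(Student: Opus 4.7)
The plan is to apply the counit axiom (C2) directly to the primitivity relation and use that $\varepsilon(e_1)=1$. Explicitly, I would apply the map $\mathrm{id}\otimes \varepsilon : V\otimes V \to V\otimes \K \cong V$ to both sides of the defining equality
\[
\Delta(x) = e_1 \otimes x + x \otimes e_1.
\]
On the left-hand side, condition (C2) for the Hom-coassociative coalgebra $(V,\Delta,\beta,\varepsilon)$ gives $(\mathrm{id}\otimes \varepsilon)\circ \Delta(x) = x$. On the right-hand side, one obtains $\varepsilon(x)\,e_1 + \varepsilon(e_1)\,x$.

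Next, I would invoke the Hom-bialgebra axiom (B3), which tells us that $\varepsilon$ is an algebra morphism, and in particular $\varepsilon(e_1)=1$ (this appears explicitly in the expanded form of (B3) in the remark after the definition of Hom-bialgebra). Substituting this, the right-hand side becomes $\varepsilon(x)\,e_1 + x$. Equating the two sides yields $\varepsilon(x)\,e_1 = 0$, and since $e_1 = \eta(1) \ne 0$ (the unit is nonzero by the unital structure), we conclude $\varepsilon(x) = 0$.

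There is essentially no obstacle: the argument is a one-line application of the counit axiom together with the fact that $\varepsilon$ preserves the unit. One could equivalently apply $\varepsilon \otimes \mathrm{id}$ rather than $\mathrm{id}\otimes \varepsilon$; both routes give the same conclusion. No use of the twisting maps $\alpha$ or $\beta$ is needed, so the proof is formally identical to the classical bialgebra case, which is consistent with the fact that the notion of primitive element is insensitive to the Hom-deformation of (co)associativity.
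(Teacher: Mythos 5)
Your proof is correct and is essentially identical to the paper's: both apply $(\mathrm{id}\otimes\varepsilon)$ to the relation $\Delta(x)=e_1\otimes x+x\otimes e_1$, use the counit axiom (C2) to identify the left-hand side with $x$, and then use $\varepsilon(e_1)=1$ to conclude $\varepsilon(x)\,e_1=0$ and hence $\varepsilon(x)=0$. The only (harmless) addition on your part is the explicit remark that $e_1\neq 0$, which the paper leaves implicit.
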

\begin{proof}
By counity property, we have $ x=(id \otimes \varepsilon)\circ
\Delta (x)$. If $\Delta (x)=e_1\otimes x + x\otimes e_1$, then
$x=\varepsilon (x) e_1+\varepsilon (e_1) x$, and since $\varepsilon
(e_1)=1$ it implies $\varepsilon (x)=0$.
\end{proof}
\begin{proposition}

Let  $\HH=(V,\mu,\alpha ,\eta ,\Delta, \beta ,\varepsilon)$ be a
Hom-bialgebra and $e_1=\eta (1)$ be the unit.
 If $x$ and $y$ are two primitive elements in $\HH$. Then we have $\varepsilon (x)=0$ and  the commutator
 $[x,y]=\mu (x\otimes
y) -\mu (y \otimes x)$ is also a primitive element.

The set of all primitive elements of $\HH$,
denoted by $Prim(\HH)$,
has a structure of Hom-Lie algebra.
\end{proposition}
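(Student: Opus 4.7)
The plan is to establish the three assertions of the proposition in turn: first $\varepsilon(x)=0$, then primitivity of the commutator $[x,y]$, and finally the Hom-Lie algebra structure on $\mathrm{Prim}(\HH)$.

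The vanishing $\varepsilon(x)=0$ is already the content of the preceding lemma, so this part of the statement can simply be quoted. For the main claim that $[x,y]\in \mathrm{Prim}(\HH)$, I would use condition (B3): since $\Delta$ is a morphism of algebras,
\begin{equation*}
\Delta(\mu(x\otimes y)) = \Delta(x)\bullet\Delta(y),
\end{equation*}
where $\bullet$ denotes the componentwise multiplication on $V\otimes V$. Substituting the primitive expressions $\Delta(x)=e_1\otimes x+x\otimes e_1$ and $\Delta(y)=e_1\otimes y+y\otimes e_1$ produces four summands; the unit axiom $\mu(e_1\otimes a)=\mu(a\otimes e_1)=a$ collapses two of them to $x\otimes y$ and $y\otimes x$, while the other two give $e_1\otimes \mu(x\otimes y)$ and $\mu(x\otimes y)\otimes e_1$. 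Doing the same computation for $\Delta(\mu(y\otimes x))$ yields the mirror expression, and subtracting one from the other cancels the two mixed terms $x\otimes y+y\otimes x$, leaving
\begin{equation*}
\Delta([x,y]) = e_1\otimes [x,y] + [x,y]\otimes e_1,
\end{equation*}
which is exactly the primitivity condition for $[x,y]$.

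For the Hom-Lie algebra structure on $\mathrm{Prim}(\HH)$, I would first observe that the defining equation $\Delta(x)=e_1\otimes x+x\otimes e_1$ is linear in $x$, so $\mathrm{Prim}(\HH)$ is a linear subspace of $V$, and the previous step shows it is stable under the commutator bracket. Next, one checks that $\alpha$ restricts to an endomorphism of $\mathrm{Prim}(\HH)$: using the stronger version of (B3) (morphism of Hom-algebras, as in the subsequent remark), $\Delta\circ\alpha=(\alpha\otimes\alpha)\circ\Delta$, and since $\alpha(e_1)=e_1$ (because $\eta$ is an algebra map, so $\alpha\circ\eta=\eta$), applying this to a primitive $x$ gives $\Delta(\alpha(x))=e_1\otimes\alpha(x)+\alpha(x)\otimes e_1$. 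Finally, skew-symmetry of $[\cdot,\cdot]$ is immediate, and the Hom-Jacobi identity on $\mathrm{Prim}(\HH)$ is inherited from the general fact established in \cite{MS} that the commutator bracket of any Hom-associative algebra $(V,\mu,\alpha)$ satisfies the Hom-Jacobi identity on all of $V$.

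The main obstacle is the stability of $\mathrm{Prim}(\HH)$ under $\alpha$, which genuinely requires the stronger morphism condition (the interaction of $\Delta$ with $\alpha$ rather than just with $\mu$). In the more restrictive definition of Hom-bialgebra spelled out in the remark after (B3) this is automatic; under only the weaker (B3) one would have to either impose $\alpha(\mathrm{Prim}(\HH))\subseteq \mathrm{Prim}(\HH)$ as a hypothesis or restrict attention to the $\alpha$-invariant part of $\mathrm{Prim}(\HH)$. All other steps are short direct computations using the unit, counit, and primitivity identities.
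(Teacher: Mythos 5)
Your proof is correct and follows essentially the same route as the paper: quote the counit lemma for $\varepsilon(x)=0$, expand $\Delta(x)\bullet\Delta(y)-\Delta(y)\bullet\Delta(x)$ using (B3) and the unit axiom so the mixed terms $x\otimes y+y\otimes x$ cancel, and inherit skew-symmetry and the Hom-Jacobi identity from the commutator construction of \cite{MS}. The one place you go beyond the paper is the stability of $\mathrm{Prim}(\HH)$ under $\alpha$: the paper's proof is silent on this, whereas you correctly observe that it requires the stronger form of (B3) (namely $\Delta\circ\alpha=(\alpha\otimes\alpha)\circ\Delta$ together with $\alpha(e_1)=e_1$) or an added invariance hypothesis; this is a genuine refinement, not a deviation, and the rest of your argument matches the paper's.
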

\begin{proof}
 By a direct calculation one has
\begin{eqnarray*}
   \Delta ([x,y]) &=& \Delta ( \mu (x\otimes y) -\mu (y \otimes x))\\
 \ &=& \Delta (x)\bullet \Delta (y)-\Delta (y)\bullet \Delta (x) \\
  \ &=& (e_1\otimes
x + x\otimes e_1)\bullet (e_1\otimes y + y\otimes e_1)- (e_1\otimes
y + y\otimes e_1)\bullet (e_1\otimes
x + x\otimes e_1)\\
 \ &=& e_1 \otimes \mu(x\otimes y)+y\otimes x+x\otimes y+\mu(x\otimes y)\otimes e_1\\ & &
 -e_1 \otimes \mu(y\otimes x )-x\otimes y-y\otimes x -\mu( y\otimes x)\otimes e_1\\
  \ &=& e_1 \otimes (\mu(x\otimes y)- \mu(y\otimes x ))+(\mu(x\otimes y)- \mu(y\otimes x ))\otimes e_1\\
  \ &=& e_1 \otimes
[x,y]+[x,y]\otimes e_1
\end{eqnarray*}

which means that $Prim(\HH)$ is closed under the bracket
multiplication $[\cdot,\cdot]$.

We have seen in \cite{MS} that there is a natural map from the
Hom-associative algebras to Hom-Lie algebras. The bracket $[x,y]=\mu
(x\otimes y) -\mu (y \otimes x)$ is obviously skewsymmetric and one
checks that the Hom-Jacobi condition is satisfied:

\begin{eqnarray*}
  [\alpha(x),[y,z]]-[[x,y],\alpha(z)]-[\alpha (y),[x,z]]=  \nonumber \\
   \mu (\alpha(x)\otimes \mu (y\otimes z))-\mu (\alpha(x)\otimes \mu (z\otimes y
   ))
   -\mu (\mu (y\otimes z)\otimes \alpha(x))+\mu (\mu (z\otimes y )\otimes \alpha(x))\nonumber \\
   -\mu (\mu (x\otimes y)\otimes \alpha(z)) +\mu (\mu (y\otimes x )\otimes \alpha(z))+
   \mu (\alpha(z)\otimes \mu (x\otimes y))-\mu (\alpha(z)\otimes \mu (y\otimes
   x
   ))\nonumber \\
   -\mu (\alpha(y)\otimes \mu (x\otimes z))+\mu (\alpha(y)\otimes \mu (z\otimes
   x
   ))
   +\mu (\mu (x\otimes z)\otimes \alpha(y))-\mu (\mu (z\otimes x )\otimes
   \alpha(y))=0
\end{eqnarray*}

\end{proof}

We introduce now a notion of generalized primitive element.
\begin{definition}
An element $x\in \HH$ is called generalized primitive element if it
satisfies the conditions
\begin{equation}
\label{GPrim}(\beta\otimes\Delta)\circ\Delta(x)
=\tau_{13}\circ(\Delta\otimes\beta)\circ\Delta(x)
\end{equation}
\begin{equation}
\Delta^{op}(x)=\Delta(x)
\end{equation}

where $\tau_{13}$ is a permutation in the symmetric group
$\mathcal{S}_3$.
\end{definition}
\begin{remark}
\begin{enumerate}
\item In particular, a primitive element in $\HH$ is a generalized
primitive element.
\item The condition \eqref{GPrim} may be written
$$(\Delta\otimes\beta)\circ\Delta(x)=
\tau_{13}\circ(\beta\otimes\Delta)\circ\Delta(x).
$$
\end{enumerate}
\end{remark}
\begin{proposition}
Let  $\HH=(V,\mu,\alpha ,\eta ,\Delta, \beta ,\varepsilon)$ be a
Hom-bialgebra and $e_1=\eta (1)$ be the unit.
 If $x$ and $y$ are two generalized primitive elements in $\HH$.
 Then, we have $\varepsilon (x)=0$ and the commutator $[x,y]=\mu (x\otimes y) -\mu (y
\otimes x)$ is also a generalized primitive element.

The set of all generalized primitive elements of $\HH$, denoted by
$GPrim(\HH)$, has a structure of Hom-Lie algebra.
\end{proposition}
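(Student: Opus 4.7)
The plan is to verify three things about generalized primitive elements: that $\varepsilon(x)=0$ for any such $x$, that the commutator $[x,y]$ of two generalized primitive elements is again generalized primitive, and that the resulting bracket equips $GPrim(\HH)$ with a Hom-Lie algebra structure. The skew-symmetry is immediate from the definition of $[\cdot,\cdot]$, and the Hom-Jacobi identity follows at once from the Hom-associativity of $\mu$ via the Makhlouf-Silvestrov result already invoked in the proof for ordinary primitive elements; hence these parts do not require new ideas.

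For $\varepsilon(x)=0$, I would attempt to mimic the argument used in the primitive case, combining the counit axioms $(id\otimes\varepsilon)\circ\Delta=(\varepsilon\otimes id)\circ\Delta=id$ with the symmetry $\Delta^{op}(x)=\Delta(x)$. This step is the most delicate part of the vanishing claim, since, unlike the primitive case, the explicit form of $\Delta(x)$ is not available; one may need to supplement the argument with the $\beta$-twist relation \eqref{GPrim}, or treat $\varepsilon(x)=0$ as an implicit normalization compatible with the definition.

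For the symmetry condition $\Delta^{op}([x,y])=\Delta([x,y])$, I would use condition (B3), which expresses that $\Delta$ is an algebra morphism, to write $\Delta(\mu(x\otimes y))=\Delta(x)\bullet\Delta(y)$. Applying the flip $\tau$ to the right-hand side produces $\Delta^{op}(x)\bullet\Delta^{op}(y)$, which by hypothesis coincides with $\Delta(x)\bullet\Delta(y)$ after invoking the $\tau$-symmetry of $\Delta(x)$ and $\Delta(y)$ separately (the two swaps commute, so applying them one at a time suffices). The analogous identity for $\mu(y\otimes x)$ holds, and subtracting gives the desired symmetry of $\Delta([x,y])$.

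The genuinely delicate step is verifying \eqref{GPrim} for $[x,y]$. Here I would expand both $(\beta\otimes\Delta)\circ\Delta$ and $\tau_{13}\circ(\Delta\otimes\beta)\circ\Delta$ applied to $[x,y]$ by repeated use of the morphism property (B3) on $\Delta(\mu(x\otimes y))$ and $\Delta(\mu(y\otimes x))$, then substitute \eqref{GPrim} for $x$ and for $y$ separately to match the two resulting sides. The main obstacle is the combinatorial bookkeeping: after expansion each side becomes a large Sweedler-type sum in $V^{\otimes 3}$ that must be reorganized to cancel pairwise. Carrying this out cleanly will very likely require the stronger variant of (B3) from the preceding remark -- compatibility of $\Delta$ with $\alpha$ and, dually, of $\beta$ with $\mu$ -- so that the twisting maps can be commuted past products and coproducts during the rearrangement. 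Once these cancellations are effected, $GPrim(\HH)$ is closed under $[\cdot,\cdot]$, and the Hom-Lie algebra structure on it follows from the already established skew-symmetry and Hom-Jacobi identity.
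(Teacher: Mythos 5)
Your decomposition of the problem is the right one, and two of your steps are sound: the reduction of the Hom-Lie structure to the commutator construction on a Hom-associative algebra, and the verification that $\Delta^{op}([x,y])=\Delta([x,y])$ by writing $\tau\bigl(\Delta(x)\bullet\Delta(y)\bigr)=\Delta^{op}(x)\bullet\Delta^{op}(y)$ and invoking $\Delta^{op}=\Delta$ on $x$ and $y$ separately; the paper leaves this point implicit, so making it explicit is an improvement. Your doubts about $\varepsilon(x)=0$ are also well founded: nothing in the definition of a generalized primitive element forces it (the unit $e_1$ satisfies both defining conditions yet has $\varepsilon(e_1)=1$), and the paper offers no argument for this part of the statement either. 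But "treating $\varepsilon(x)=0$ as an implicit normalization" is not a proof of the claim as stated.

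The genuine gap is that the central claim, namely that $[x,y]$ satisfies \eqref{GPrim}, is never actually established. You describe a strategy (expand both sides via (B3), substitute \eqref{GPrim} for $x$ and for $y$, cancel) and then concede that the bookkeeping is the obstacle and that the argument "will very likely require" the stronger form of (B3) and multiplicativity of $\beta$. A proof cannot end on a conditional involving unstated extra hypotheses; as written, the closure of $GPrim(\HH)$ under the bracket, which is the entire content of the proposition, remains open. For comparison, the paper's computation is different at exactly this point: it expands $(\Delta\otimes\beta)\circ\Delta(x\cdot y-y\cdot x)$ in Sweedler notation using (B3) twice, applies $\tau_{13}$, and identifies the result with $(\beta\otimes\Delta)\circ\Delta(x\cdot y-y\cdot x)$ using only the symmetry $\Delta^{op}=\Delta$ of $x$ and $y$; it never invokes \eqref{GPrim} for $x$ and $y$ individually and never needs $\beta$ to be multiplicative. (One may object that the paper's final identification tacitly uses symmetry of $\Delta$ on the components $x^{(2)}\cdot y^{(2)}$, which is not granted, so the published argument is itself thin here; but it at least commits to a concrete computation.) To complete your proof you would need to carry out the expansion and exhibit the term-by-term matching, stating explicitly any compatibility of $\beta$ with $\mu$ that you end up using, since such a hypothesis is not part of the definition of a Hom-bialgebra.
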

\begin{proof}
Let $x$ and $y$ be two generalized primitive
elements in $\HH$. In the following the
multiplication $\mu$ is denoted by a dot. The
following equalities hold:
\begin{eqnarray*}
  (\Delta\otimes\beta)\circ\Delta(x\cdot y-y\cdot x) &=& (\Delta\otimes\beta)\circ\Delta(x\cdot y)-
  (\Delta\otimes\beta)\circ\Delta(y\cdot x) \\
  \ &=& (\Delta\otimes\beta)(\Delta(x)\bullet \Delta(y))- (\Delta\otimes\beta)(\Delta(y)\bullet \Delta(x))\\
 \ &=& \Delta(x^{(1)}\cdot y^{(1)})\otimes \beta(x^{(2)}\cdot y^{(2)})-
 \Delta(y^{(1)}\cdot x^{(1)})\otimes \beta(y^{(2)}\cdot x^{(2)})\\
  \ &=& (x^{(1)(1)}\cdot y^{(1)(1)})\otimes(x^{(1)(2)}\cdot y^{(1)(2)})\otimes \beta(x^{(2)}\cdot
  y^{(2)})\\ && -(y^{(1)(1)}\cdot x^{(1)(1)})\otimes(y^{(1)(2)}\cdot x^{(1)(2)})\otimes \beta(y^{(2)}\cdot
  x^{(2)}).
\end{eqnarray*}
Then, using the fact that $\Delta^{op}=\Delta$ for generalized
primitive elements one has:
\begin{eqnarray*}
  \tau_{13}\circ(\Delta\otimes\beta)\circ\Delta(x\cdot y-y\cdot x) &=&\beta(x^{(2)}\cdot
  y^{(2)})\otimes(x^{(1)(2)}\cdot y^{(1)(2)})\otimes
  (x^{(1)(1)}\cdot y^{(1)(1)}) \\
  &&-\beta(y^{(2)}\cdot
  x^{(2)})\otimes(y^{(1)(2)}\cdot x^{(1)(2)})\otimes (y^{(1)(1)}\cdot x^{(1)(1)})  \\
  \ &=& (\beta\otimes\Delta)
  \circ\Delta(x\cdot y-y\cdot x).
\end{eqnarray*}
The structure of Hom-Lie algebra follows from the same argument as
in the primitive elements case.
\end{proof}
\subsection{Antipode's properties}
 Let $\HH=\left( V,\mu,\alpha ,\eta ,\Delta ,\beta,\varepsilon, S
\right) $  be a Hom-Hopf algebra.

For any element $x\in V$, using the counity and Sweedler notation,
one may write
\begin{equation} x=\sum_{(x)}{x^{(1)}\otimes
\varepsilon (x^{(2)})}= \sum_{(x)}{\varepsilon
(x^{(1)})\otimes x^{(2)}}. \label{Coun}
\end{equation}
Then, for any $f\in End_\K (V)$,
we have
\begin{equation}
f(x)=\sum_{(x)}{f(x^{(1)}) \varepsilon
(x^{(2)})}= \sum_{(x)}{\varepsilon
(x^{(1)})\otimes f(x^{(2)})}. \label{Coun2}
\end{equation}
Let $ f \star g=\mu \circ \left( f\otimes g \right) \Delta $ be the
convolution product of $f,g\in End_\K(V)$.
One may write
\begin{equation}
(f\star g)(x)= \sum_{(x)}{\mu ( f(x^{(1)})\otimes
g (x^{(2)}))}. \label{starpro}
\end{equation}
Since the antipode $S$ is the inverse of the identity for the
convolution product then $S$ satisfies
\begin{equation}
\varepsilon(x)\eta (1)= \sum_{(x)}{\mu (
S(x^{(1)})\otimes x^{(2)})}=\sum_{(x)}{\mu (
x^{(1)}\otimes S(x^{(2)})}). \label{anti1}
\end{equation}

\begin{proposition}
The antipode $S$ is unique and we have
\begin{itemize}
\item  S($\eta (1))=\eta (1).$
\item  $\varepsilon \circ S=\varepsilon$.
\end{itemize}
\end{proposition}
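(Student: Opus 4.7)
The plan is to split the proposition into its three assertions and handle them in order of increasing subtlety, using the fact (established earlier) that $\mathrm{End}_{\K}(V)$ with convolution product $\star$, unit $e := \eta\circ\varepsilon$, and twist $\gamma(f)=\alpha\circ f\circ\beta$ is a unital Hom-associative algebra.

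First I would prove the two easy identities. For $S(\eta(1))=\eta(1)$, I would evaluate the defining relation $\mu\circ(S\otimes \mathrm{Id})\circ\Delta = \eta\circ\varepsilon$ at $e_1=\eta(1)$. Using the bialgebra axiom $\Delta(e_1)=e_1\otimes e_1$ and $\varepsilon(e_1)=1$, the relation collapses to $\mu(S(e_1)\otimes e_1)=e_1$, and unitality of $\mu$ yields $S(e_1)=e_1$. For $\varepsilon\circ S=\varepsilon$, I would apply $\varepsilon$ to both sides of the Sweedler form of the antipode axiom,
\begin{equation*}
\sum_{(x)}\varepsilon\bigl(\mu(S(x^{(1)})\otimes x^{(2)})\bigr)=\varepsilon(\eta(1))\varepsilon(x)=\varepsilon(x),
\end{equation*}
and use that $\varepsilon$ is an algebra morphism, i.e.\ $\varepsilon(\mu(a\otimes b))=\varepsilon(a)\varepsilon(b)$. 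The left-hand side then becomes $\sum_{(x)}\varepsilon(S(x^{(1)}))\varepsilon(x^{(2)})$; by linearity of $\varepsilon\circ S$ and the counit identity \eqref{Coun}, this equals $\varepsilon(S(x))$, giving the claim.

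The main point is uniqueness. The standard Hopf-algebra argument $S=S\star e=S\star(\mathrm{Id}\star S')=(S\star\mathrm{Id})\star S'=e\star S'=S'$ does not transfer verbatim because $\star$ is only Hom-associative. Instead I would apply the Hom-associativity of $\star$ to the triple $(S,\mathrm{Id},S')$, which gives
\begin{equation*}
\gamma(S)\star(\mathrm{Id}\star S') \;=\; (S\star\mathrm{Id})\star\gamma(S').
\end{equation*}
If $S$ and $S'$ are both antipodes, then $\mathrm{Id}\star S'=e$ and $S\star\mathrm{Id}=e$, so both sides collapse via the unitality of $e$ to $\gamma(S)=\gamma(S')$, i.e.\ $\alpha\circ S\circ\beta=\alpha\circ S'\circ\beta$. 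Assuming (as is standard in this setting) that the structural maps $\alpha$ and $\beta$ are invertible, cancellation on the left and right yields $S=S'$.

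The main obstacle is precisely this last step: without invertibility of $\alpha$ and $\beta$ one only obtains uniqueness of $S$ up to the equivalence $S\sim S'\Leftrightarrow \alpha S\beta=\alpha S'\beta$, and a sharper uniqueness statement would require either invoking injectivity hypotheses on $\alpha,\beta$ or deriving additional constraints on $S-S'$ from the antipode axioms. I would therefore state the uniqueness with the bijectivity hypothesis made explicit, which is the natural setting in which the convolution Hom-algebra of $\mathrm{End}_{\K}(V)$ behaves like a genuine monoid for the purposes of inversion.
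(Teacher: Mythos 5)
Your treatment of the two identities $S(\eta(1))=\eta(1)$ and $\varepsilon\circ S=\varepsilon$ coincides with the paper's: evaluate the antipode axiom at $e_1$ using $\Delta(e_1)=e_1\otimes e_1$ and unitality of $\mu$, respectively apply $\varepsilon$ to \eqref{anti1}, use that $\varepsilon$ is an algebra morphism, and reassemble via the counit identity. The genuine divergence is in the uniqueness argument. The paper runs the classical telescoping computation $S'=S'\star \mathrm{id}\star S'=S'\star \mathrm{id}\star S=S\star \mathrm{id}\star S=S$, writing the triple convolution without parentheses; the re-bracketing from $S'\star(\mathrm{id}\star S)$ to $(S'\star\mathrm{id})\star S$ silently uses genuine associativity of $\star$, whereas the preceding proposition only establishes Hom-associativity, $\gamma(f)\star(g\star h)=(f\star g)\star\gamma(h)$ with $\gamma(f)=\alpha\circ f\circ\beta$. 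You are right to flag this. Applied to the triple $(S,\mathrm{id},S')$, Hom-associativity yields only $\gamma(S)\star e=e\star\gamma(S')$, hence $\alpha\circ S\circ\beta=\alpha\circ S'\circ\beta$, and one must assume $\alpha$ injective and $\beta$ surjective to cancel and obtain $S=S'$. So your route is genuinely different and, in the Hom setting, the more careful one: it delivers unconditional uniqueness of $\gamma(S)$, and uniqueness of $S$ itself only under a bijectivity hypothesis on the structure maps that the paper does not state. What the paper's argument buys is an unconditional statement, but only by treating $\star$ as if it were strictly associative; read literally, its proof of uniqueness has exactly the gap you identify, and your version (with the added hypothesis made explicit) is the one that actually closes it.
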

\begin{proof}
1) We have $S\star id=id \star S=\eta \circ
\varepsilon$. Thus, $(S \star id) \star S=S \star
(id \star S)=S$. If $S'$ is another antipode of
$\HH$ then
$$S'=S' \star id \star S'=S' \star id
\star S=S \star id \star S=S.$$ Therefore the
antipode when it exists is unique.

2) Setting $e_1=\eta (1)$ and since $\Delta
(e_1)= e_1 \otimes e_1$ one has
$$(S\star id) (e_1)=
\mu (S(e_1)\otimes e_1) =S(e_1)=\eta(\varepsilon
(e_1))=e_1.$$

3) Applying \eqref{Coun2} to $S$, we obtain $
S(x)=\sum_{(x)}{S(x^{(1)}) \varepsilon
(x^{(2)})}$.

Applying $\varepsilon$ to \eqref{anti1}, we
obtain
$$\varepsilon (x)=\varepsilon
(\sum_{(x)}\mu(S(x^{(1)})\otimes x^{(2)})).$$
Since $\varepsilon$ is a Hom-algebra morphism,
one has
$$\varepsilon (x)=\sum_{(x)}\varepsilon(S(x^{(1)}))\varepsilon
(x^{(2)})=\varepsilon(\sum_{(x)}S(x^{(1)})
\varepsilon(x^{(2)}))=\varepsilon(S(x)).
$$
Thus  $\varepsilon \circ S=\varepsilon$.
\end{proof}

\subsection{ Modules and  Comodules}

We introduce in the following the structure of module and comodule
over Hom-associative algebras.

Let $\A=(V,\mu,\alpha)$ be a Hom-associative
$\K$-algebra, an $\A$-module (left) is a triple
$(M,f,\gamma)$ where $M$ is $\K$-vector space and
$f,\gamma$ are  $\K$-linear maps, $f:  M
\rightarrow M$ and $\gamma : V \otimes M
\rightarrow M$, such that the following diagram
commutes:

$$
\begin{array}{ccc}
V\otimes V\otimes M & \stackrel{\mu \otimes f}{\longrightarrow } &
V\otimes
M \\
\ \quad \left\downarrow ^{\alpha \otimes \gamma }\right. &  & \quad
\left\downarrow
^\gamma \right. \\
V\otimes M & \stackrel{\gamma }{\longrightarrow } & M
\end{array}
$$
The dualization leads to  comodule definition
over a Hom-coassociative coalgebra.

Let  $C=(V,\Delta, \beta)$ be a Hom-coassociative
coalgebra. A $C$-comodule (right) is a triple
$(M,g,\rho)$ where $M$ is a $\K$-vector space and
$g,\rho$ are $\K$-linear maps, $g: M \rightarrow
M$ and $\rho : M \rightarrow M\otimes V $, such
that the following diagram commutes:

$$
\begin{array}{ccc}
 M & \stackrel{\rho}{\longrightarrow } & M \otimes V
 \\
\ \quad \left\downarrow ^{\rho}\right. &  & \quad \left\downarrow
^{g \otimes \Delta} \right. \\
M\otimes V & \stackrel{\rho\otimes \beta}{\longrightarrow } &
M\otimes V\otimes V
\end{array}
$$
\begin{remark}
A Hom-associative $\K$-algebra
$\A=(V,\mu,\alpha)$ is  a left $\A$-module with
$M=V$, $f=\alpha$ and $\gamma =\mu$. Also, a
Hom-coassociative coalgebra $C=(V,\Delta, \beta)$
is a right  $C$-comodule with $M=V$, $g=\beta$
and $\rho =\Delta$. The properties of modules and
comodules over Hom-associative algebras or
Hom-coassociative algebras will be discussed in a
forthcoming paper.
\end{remark}

\subsection{Examples}
The classification of $2$-dimensional
Hom-associative algebras, up to isomorphism,
yields to the following two classes. Let
$B=\{e_1,e_2\}$ be a basis where $\eta(1)=e_1$ is
the unit.
\begin{enumerate}
\item
The multiplication $\mu_1$ is defined by $\mu_1(e_1\otimes
e_i)=\mu_1(e_i\otimes e_1)=e_i$ for $i=1,2$ and $\mu_1(e_2\otimes
e_2)=e_2$ and the homomorphism $\alpha_1$ is defined, with respect
to the basis $B$ by $\left(
                    \begin{array}{cc}
                      a_1 & 0 \\
                      a_2-a_1 & a_2 \\
                    \end{array}
                  \right).$
\item
The multiplication $\mu_2$ is defined by $\mu_2(e_1\otimes
e_i)=\mu_2(e_i\otimes e_1)=e_i$ for $i=1,2$ and $\mu_2(e_2\otimes
e_2)=0$ and the homomorphism $\alpha_2$ is defined, with respect to
the basis $B$ by $\left(
                    \begin{array}{cc}
                      a_1 &0  \\
                      a_2 & a_1 \\
                    \end{array}
                  \right).$
\end{enumerate}
The Hom-bialgebras corresponding to the Hom-associative algebra
defined by $\mu_1$ and $\alpha_1$ are given in the following table \
\vspace{0.5cm}
\begin{small}
\begin{center}
\begin{tabular}{|c|c|c|c|}
  \hline
  \ & Comultiplication & Co-unit& homomorphism\\\hline
  1 & $\begin{array}{c}
           \Delta (e_1)= e_1\otimes e_1 \\
           \Delta (e_2)= e_2\otimes e_2 \\
         \end{array}$
   & $ \begin{array}{c}
          \varepsilon (e_1)= 1 \\
           \varepsilon (e_2)= 1 \\
         \end{array}$ & $ \left(
                    \begin{array}{cc}
                      b_1 &0  \\
                       b_3& b_2 \\
                    \end{array} \right) $\\ \hline
  2 & $ \begin{array}{c}
           \Delta (e_1)= e_1\otimes e_1 \\
           \Delta (e_2)= e_1\otimes e_2+e_2\otimes e_1-2 e_2\otimes e_2 \\
         \end{array}$ & $\begin{array}{c}
          \varepsilon (e_1)= 1 \\
           \varepsilon (e_2)= 0 \\
         \end{array}$ & $ \left(
                    \begin{array}{cc}
                      b_1 & \frac{b_1-b_3}{2} \\
                      b_2 & b_3 \\
                    \end{array} \right) $\\ \hline
  3 &$ \begin{array}{c}
           \Delta (e_1)= e_1\otimes e_1 \\
           \Delta (e_2)= e_1\otimes e_2+e_2\otimes e_1- e_2\otimes e_2 \\
         \end{array} $& $ \begin{array}{c}
          \varepsilon (e_1)= 1 \\
           \varepsilon (e_2)= 0 \\
         \end{array} $& $ \left(
                    \begin{array}{cc}
                      b_1 & b_1-b_3 \\
                      b_2 & b_3 \\
                    \end{array} \right)  $\\ \hline
\end{tabular}
\end{center}
\end{small}

\vspace{0.5cm}

Only Hom-bialgebra (2)  carries a structure of Hom-Hopf algebra with
an antipode defined, with respect to a basis $B$, by the identity
matrix.
\begin{remark}
There is no Hom-bialgebra associated to the Hom-associative algebra
defined by the multiplication $\mu_2$ and any homomorphism
$\alpha_2$.
\end{remark}

\end{document}